\makeatletter\@addtoreset{equation}{section}\makeatother
\newenvironment{shadedbox}[1][]%
        {
          \raggedright
        \setlength{\rightmargin}{\leftmargin}
        \setlength{\itemsep}{-12pt}
        \setlength{\parsep}{20pt}
        \begin{lrbox}{\@tempboxa}%
        \begin{minipage}{\linewidth-2\fboxsep}
        }%
        {
        \end{minipage}%
        \end{lrbox}%
        \fbox{\usebox{\@tempboxa}}\newline
        }%
\newcommand{\Kl}{\left(}
\newcommand{\Kr}{\right)}
\newcommand{\eps}{\varepsilon}
\newcommand{\emm}{\mathfrak m}
\newcommand{\del}{\delta}
\newcommand{\ssup}[1] {{\scriptscriptstyle{({#1}})}}
\newcommand{\one}{{\mathbbm 1}}
\newcommand{\E}{\mathbb E}
\renewcommand{\P}{\mathbb P}
\newcommand{\bP}{\mathbf{P}}
\newcommand{\cG}{\mathcal{G}}
\renewcommand{\phi}{\varphi}
\newcommand{\N}{\mathbb N}
\renewcommand{\P}{\mathbb P}
\newtheorem{rem}{Remark}
\newtheorem{theorem}{Theorem}
\newtheorem{lemma}[theorem]{Lemma}
\newtheorem{prop}[theorem]{Proposition}
\theoremstyle{remark}
\newtheorem{example}{Example}
\newcommand{\heap}[2]  {\genfrac{}{}{0pt}{}{#1}{#2}}
\newcommand{\sfrac}[2] {\mbox{$\frac{#1}{#2}$}}
\def\1{{\mathchoice {1\mskip-4mu\mathrm l}      
{1\mskip-4mu\mathrm l}
{1\mskip-4.5mu\mathrm l} {1\mskip-5mu\mathrm l}}}
\newcommand{\ind}{1\hspace{-0.098cm}\mathrm{l}}
\begin{document}


\title[Typical distances in ultrasmall preferential attachment graphs]
{Typical distances in ultrasmall random networks}

\author[Steffen Dereich, Christian M\"onch, and Peter M\"orters]{Steffen Dereich, Christian M\"onch, and Peter M\"orters}

\maketitle

\vspace{-0.1cm}

\begin{quote}
{\small {\bf Abstract:} }
We show that in preferential attachment models with power-law exponent $\tau\in(2,3)$ the distance between
randomly chosen vertices in the giant component is asymptotically equal to 
$(4+o(1))\, \frac{\log\log N}{-\log (\tau-2)}$, where $N$ denotes the number of nodes. This is 
twice the value obtained for several types of configuration models with the same power-law exponent. 
The extra factor reveals the different structure of typical 
shortest paths in preferential attachment graphs. 
\end{quote}


{\footnotesize
\vspace{0.1cm}
\noindent\emph{MSc Classification:}  Primary 05C80 Secondary 60C05, 90B15.

\noindent\emph{Keywords:} Scale-free network, Barabasi-Albert model, preferential
attachment, configuration model, dynamical random graph, power law graph,  giant component, graph distance, diameter.}

\vspace{0.1cm}

\section{Introduction}

One of the central observations in the theory of scale-free random networks is that in the case of power-law exponents
$\tau\in(2,3)$ networks are \emph{ultrasmall}, which means that the distance of two randomly chosen nodes in the giant component of a
graph with $N$ vertices is of asymptotic order $\log\log N$. The first analytical, but mathematically nonrigorous, evidence for 
this general phenomenon can be found, for example, in  Cohen and Havlin~\cite{CH03} or Dorogovtsev et al.~\cite{DMS03}, and 
there are also some early papers with rigorous results for specific network models, in particular the work of Reittu and Norros~\cite{RN02}
and the work of Chung and Lu~\cite{CL02}.
\medskip

In the present paper we refine this observation and identify graph distances including constant factors. Our main result is a universal technique for 
proving lower bounds for typical distances, which in a wide range of examples matches the best upper bounds known from the recent literature. 
The result is presented in the form of two theorems, which reveal that ultrasmall networks can be divided into two different \emph{universality classes}: 
For the class of ultrasmall preferential attachment models the typical distances turn out to be 
twice as large as for models in the class of configuration models. This difference corresponds to different structures 
of typical shortest paths in the network. We show that the two classes can be easily identified from the form 
of the attachment probability densities in the networks. We remark here that our work is focused on \emph{typical distances} in networks, as 
results on diameters tend to be model dependent and universality results are not to be expected. 
\medskip%

At least informally, we have some structural insight into typical shortest paths in ultrasmall networks, see
for example Norros and Reittu~\cite{NR08}. For the class of \emph{configuration models} it turns out that typical vertices in the 
giant component can be connected with a few steps to a \emph{core} of the network. 
Within this core there is a hierarchy of \emph{layers} of nodes with increasing 
connectivity and at the top a small \emph{inner core} of highly connected nodes with very small diameter. 
A typical shortest path inside the core runs from one layer to the next until the inner core is reached, and 
then climbing down again until a  vertex in the lowest layer of the core is again connected to a typical vertex. 
\medskip


A high degree of a vertex increases its connectivity to any other vertex, and hence
the layers can be identified by vertex  degrees. Very roughly speaking the $j$th layer consists of vertices with degree $k_j$ where $$\log k_j \approx (\tau-2)^{-j}$$ and there are about 
$$\frac{\log\log N}{-\log (\tau-2)}$$ layers. The graph distance of two randomly chosen vertices in the giant component 
is therefore $$\big(2+o(1)\big)\,  \frac{\log\log N}{-\log (\tau-2)}.$$ 
These asymptotics are rigorously confirmed for two variants of an inhomogeneous random graph model, by Chung and Lu~\cite{CL02} and Norros and 
Reittu~\cite{NR06}, and for a model with fixed degree sequence by van der Hofstad et al.\ in~\cite{HHZ07}. See also van der Hofstad~\cite{H11}
for a summary of various results with detailed proofs. In general upper bounds on the distances can be obtained by verifying the above strategy, while our Theorem~\ref{thm:noPA} provides a flexible (i.e. model-independent) approach to the lower bound. 
\medskip

For the more complex class of ultrasmall \emph{preferential attachment models} 
existing results are far less complete. Dommers et al.~\cite{DHH10} show that for various ultrasmall preferential
attachment models 
the typical distance of two vertices in the giant component is bounded
from above by $$\big(4+o(1)\big)\,  \frac{\log\log N}{-\log (\tau-2)}.$$ 
A corresponding lower bound, and hence confirmation of the exact factor 4, is stated as an interesting open problem 
by van der Hofstad and Hooghiemstra in \cite[IV.B]{HH08}
and again in~\cite{DHH10}, see the remark following Theorem~1.7 and Section~1.2.  Our main result, Theorem~1, provides this bound and 
confirms, somewhat surprisingly, that the upper bound is sharp. 
Besides the models given in~\cite{DHH10} we will  also describe other examples of random network models 
in the same universality class, in which Theorem~1 applies.%
\medskip%

Loosely speaking, the shortest paths in the class of preferential attachment models can be described as follows: Again, inside a core of highly connected vertices
paths run from bottom to top and back through a hierarchy of layers defined as before. However, by construction of the preferential attachment models a high degree 
of a vertex does not increase its connectivity to  \emph{all} vertices but only to those introduced \emph{late} into the system (which are typically outside the core). 
Therefore a path cannot directly connect one layer to another in one step, but it requires \emph{two steps}: The paths run from one layer to a young vertex 
and from there back into the next higher layer. The distance of two typical vertices is therefore increased by a factor of~two.
\medskip

In the following section we formulate the precise results, consisting of two simple hypotheses on a random network leading to the two different lower bound 
results, see Theorems~\ref{thm:PA} and~\ref{thm:noPA}. The section also contains a brief sketch of the proof technique and introduces the notation used in the proofs.  
In Section~3 we then discuss several examples of networks in the two universality classes. In all these 
examples upper bounds can either be found in the literature or derived by simple modifications of these proofs. Section~4 is devoted to the proofs of our main results.

\section{Main results}


A \emph{(dynamic) network model} is a sequence of random graphs $(\cG_N)_{N\in\N}$ with the set of vertices of $\cG_N$ given
by $[N]:=\{1,2,\ldots,N\}$ and the set of unoriented edges of $\cG_N$ given by a random symmetric subset of $[N]\times [N]$. 
Occasionally we shall allow multiple edges between the same pair of vertices, but this has no bearing on the connectivity
problems discussed here, and is for convenience only. We write $v \leftrightarrow w$  if the vertices $v,w$ are connected 
by an edge in the graph $\mathcal G_N$. The graph distance is given by
$$d_N(v,w) := \min\big\{ n \colon \exists\, v=v_0, v_1, \ldots, v_n=w\in \cG_N 
\mbox{ such that } v_{i-1} \leftrightarrow v_i \,\,\forall\, 1\leq 1 \leq n\big\}.$$
The main aim of this paper is to provide techniques to find lower bounds on the \emph{typical distance},
i.e.\ the asymptotic graph distance of two randomly chosen vertices in the graph~$\cG_N$.
%
Our first result is based on the following assumption. 
\medskip


\noindent \emph{Assumption} $\mathrm{PA}(\gamma)$: \\[1mm]
\begin{shadedbox}
There exists $\kappa$ such that, for all  $N$ and pairwise distinct vertices $v_0,\dots,v_\ell\in[N]$,
$$\mathbb{P}\big\{ v_0\leftrightarrow v_1\leftrightarrow v_2\leftrightarrow \cdots \leftrightarrow v_\ell \big\}
\leq   \,\prod_{k=1}^{\ell} \kappa\, (v_{k-1}\wedge v_k)^{-\gamma} \,(v_{k-1}\vee v_k)^{\gamma-1} .$$
\end{shadedbox}
\medskip

In preferential attachment models with power law exponent $\tau$, Assumption~$\mathrm{PA}(\gamma)$ is 
typically satisfied for all $\gamma>(\tau-1)^{-1}$. Hence we expect these networks to be 
ultrasmall if and only if $\frac12<\gamma<1$. Theorem~\ref{thm:PA}, our main result, gives a lower bound on the typical distance 
in this case.
\medskip

\begin{theorem}\label{thm:PA}
	Let $(\mathcal{G}_N)_{N\in\N}$ be a dynamic network model that satisfies Assumption~$\mathrm{PA}(\gamma)$ for 
	some $\gamma$ satisfying $\frac12<\gamma<1$, then, for random vertices $V$ and $W$ chosen independently and uniformly 
        from $[N]$, we have
	    $$ d_N(V,W)\geq 4 \,\frac{\log\log N}{\log(\frac{\gamma}{1-\gamma})} + {\mathcal O}(1) $$
	with high probability.
\end{theorem}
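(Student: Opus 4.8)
The plan is a first-moment bound over self-avoiding paths, tuned so that the threshold length is $n_0:=4\,\frac{\log\log N}{\log(\gamma/(1-\gamma))}$. Set $r:=\gamma/(1-\gamma)>1$ and, for $i\ge 0$, let $m_i:=N^{r^{-i}}$, so that $m_0=N$, $m_K=\Theta(1)$ for $K:=\lfloor\log_r\log N\rfloor$, and the ``layers'' $L_i:=(m_{i+1},m_i]$ organise the vertex set into $K+\mathcal O(1)$ strata of geometrically shrinking log-index. The heuristic to keep in mind is that under $\mathrm{PA}(\gamma)$ a high index makes a vertex's connections favour \emph{late}-arriving (hence high-index) vertices, so to descend from stratum $L_i$ to $L_{i+1}$ a path must spend one edge reaching a young vertex and a second edge reaching into the lower stratum; a shortest path between two typical (and thus very young) vertices descends to some deepest vertex $u$ and climbs back, and each of the two descents costs about $2\cdot(\text{stratum index of }u)$ edges, i.e.\ a total of about $4K=n_0$ edges if $u$ lies near the bottom.

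To make this precise, write $\mathcal K(v,w):=\kappa\,(v\wedge w)^{-\gamma}(v\vee w)^{\gamma-1}$, so that Assumption $\mathrm{PA}(\gamma)$ bounds the probability of any self-avoiding path by the product of the $\mathcal K$'s along its edges. One cannot simply bound $\mathbb P(d_N(V,W)\le n)$ by the expected number of paths of length $\le n$: that expectation diverges already for bounded $n$, since it is dominated by the (very unlikely) configurations in which $V$ or $W$ is directly wired to the dense core, after which there are astronomically many paths. So the count must be truncated stratum by stratum. I would track, for a vertex $x$ and $j\ge 0$, the smallest index $\mathfrak m_j(x)$ occurring within graph distance $j$ of $x$, and prove by induction on $i$ the statement $(H_i)$: with high probability $\mathfrak m_j(V)>m_i$ and $\mathfrak m_j(W)>m_i$ for every $j\le 2i-C_0$, where $C_0$ is an absolute constant. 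The inductive step needs only to estimate the chance that some path from $V$ first enters $[m_{i+1}]$ within two more steps; conditionally on $(H_i)$ this is a first moment over paths whose vertices stay above $m_i$, so the core-shortcut configurations are already excluded and the relevant sums $\sum_w\mathcal K(v,w)$ are tame — and it is precisely the interplay of the two exponents $\gamma$ and $1-\gamma$ in $\mathcal K$ that makes two consecutive such steps shrink the log-index by the factor $r^{-1}$, advancing the induction by exactly one stratum per two edges.

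Granting $(H_K)$, the proof finishes by splitting a geodesic at its oldest vertex. Since a geodesic is self-avoiding, $d_N(V,W)\le n$ yields a self-avoiding path $V=v_0,\dots,v_{n'}=W$ with $n'\le n$; let $u=v_{i^\ast}$ have minimal index along it, say $u\in L_i$. Then $\mathfrak m_{i^\ast}(V)\le u\le m_i$ and $\mathfrak m_{n'-i^\ast}(W)\le m_i$, so $(H_i)$ forces $i^\ast\ge 2i-C_0$ and $n'-i^\ast\ge 2i-C_0$, hence $n\ge 4i-2C_0$. It remains to rule out the possibility that the deepest vertex $u$ of a geodesic between two typical vertices is shallow; but a self-avoiding path that joins two typical vertices while staying above a threshold $m_i$ must itself be long, for exactly the same reason — it still has to descend through and climb back out of all the strata $L_i,L_{i-1},\dots$ above that threshold — and this is again delivered by the truncated-first-moment induction, now run with $\mathcal K$ restricted to indices $>m_i$. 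Taking $i=K$ gives $d_N(V,W)\ge 4K-2C_0 = 4\,\frac{\log\log N}{\log(\gamma/(1-\gamma))}+\mathcal O(1)$ with high probability.

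The hard part is the inductive stratum estimate, and especially its \emph{sharpness}: obtaining the constant $4$ with only an additive $\mathcal O(1)$ error forces one to carry along the poly-logarithmic corrections in the sums $\sum_w\mathcal K(v,w)$ (for instance the oldest neighbour of a typical vertex has index $N/(\log N)^{\mathcal O(1)}$ rather than $\Theta(N)$), to check that these corrections move each threshold $\log m_i$ by only a bounded amount, and to verify that this bounded per-stratum loss does not accumulate to more than $\mathcal O(1)$ over the $\Theta(\log\log N)$ strata. Equally delicate is that the frontier $B_j(V)$ is a large random set whose own index profile — how many of its vertices sit in each stratum — has to be controlled in order to run the first moment for the next two steps, and that the self-avoidance hypothesis of $\mathrm{PA}(\gamma)$ must be respected throughout (which is why one works with geodesics, and why the split at the oldest vertex is made).
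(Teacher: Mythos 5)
Your overall strategy is the paper's own: a truncated first-moment bound in which the admissible depth of a path may advance by one stratum only every two edges, yielding the constant $4$ in $4\,\log\log N/\log(\frac{\gamma}{1-\gamma})$. Your hypotheses $(H_i)$ are the complements of the paper's threshold-crossing events $A_k^{\ssup v}$, and splitting a geodesic at its oldest vertex is a repackaging of the paper's split of the admissible-path sum at the midpoint. However, the final reduction, as you state it, fails. After the split you must still exclude short $V$--$W$ paths whose minimal index lies in a shallow stratum, i.e.\ paths all of whose vertices exceed $m_{i+1}$ with $i$ well below $K$, and you propose to do this by a first moment over paths ``with $\mathcal K$ restricted to indices $>m_i$'', i.e.\ with a single fixed threshold. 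That restricted first moment is not small: for a vertex $a$ of order $m_i$ one has $\sum_{b\ge m_i}\kappa\,a^{-\gamma}b^{\gamma-1}\asymp\kappa\,(N/m_i)^{\gamma}\gg 1$, so the assertion that ``the relevant sums $\sum_w\mathcal K(v,w)$ are tame'' is false precisely at the threshold level. Concretely, each excursion (young vertex $\to$ vertex of order $m_i$ $\to$ young vertex) multiplies the total path weight by roughly $\kappa^2 N^{2\gamma-1}m_i^{1-2\gamma}=N^{(2\gamma-1)(1-r^{-i})}\gg1$ (with your $r=\gamma/(1-\gamma)$), so over lengths of order $\log\log N$ the single-threshold first moment grows like a positive power of $N$ per pair of steps, and the one-off penalty of order $(m_i/N)^{2(1-\gamma)}$ from the two descents cannot compensate. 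The same objection hits your inductive step if ``paths whose vertices stay above $m_i$'' is meant with a single threshold. What is needed is a truncation graded along the path — at distance $k$ from the nearer endpoint the vertex must exceed a level $\ell_k$ decreasing in $k$ — which is exactly the paper's notion of an admissible path.

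Even granting the graded truncation, the quantitative core is missing, and it is not routine bookkeeping. The paper controls the truncated weights $\mu_k^{\ssup v}$ by the two-term majorant $\alpha_k m^{-\gamma}+\one_{\{m>\ell_{k-1}\}}\beta_k m^{\gamma-1}$ of Lemma~\ref{lem:Pbound}: the dangerous mass sitting on young vertices (the $m^{\gamma-1}$ part) must be carried separately and only feeds back into the old-vertex part through the factor $N^{2\gamma-1}$. The coupled recursion \eqref{elldef}--\eqref{eq112-2} for $(\alpha_k,\beta_k,\ell_k)$ then gives $N/\ell_k\le b\exp\big(B(\sqrt{\gamma/(1-\gamma)})^{k}\big)$, i.e.\ the admissible log-depth gains only a factor $\sqrt{\gamma/(1-\gamma)}$ per edge; this is where ``two edges per stratum'' — hence the constant $4$ with only an additive $\mathcal O(1)$ loss after $\Theta(\log\log N)$ strata — is actually proved, together with the midpoint bound $\sum_u\mu_{n^*}^{\ssup v}(u)\mu_{n-n^*}^{\ssup w}(u)\lesssim\delta\big(\alpha_\delta^2\ell_\delta^{1-2\gamma}+\beta_\delta^2N^{2\gamma-1}\big)$ for the surviving admissible paths. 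You correctly flag this as ``the hard part'', but your outline supplies no substitute for it, and the one place where you do commit to a mechanism (a single fixed threshold with tame neighbourhood sums) is the one that does not work.
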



Examples of network models, in which Theorem~\ref{thm:PA} can be applied, will be given as Examples~1--2 in Section~3. 
They comprise various preferential attachment models with power law exponent $\tau\in(2,3)$. 
In all these cases Assumption~$\mathrm{PA}(\gamma)$ is satisfied for all $\gamma>(\tau-1)^{-1}$, and the theorem implies that  
$$d_N(V,W)\geq\big(4+o(1)\big)\frac{\log\log N}{-\log(\tau-2)}, \ \ \text{ with high probability as $N\to\infty$.}$$ 
Matching upper bounds are known from the literature.
\medskip

An approach similar to the above can be used to study lower bounds for the typical distance
of ultrasmall \emph{configuration networks}. In this class the connection probabilities look different and we 
have to formulate a different assumption.
\medskip

\pagebreak[3]

\noindent \emph{Assumption} $\mathrm{CM}(\gamma)$: \\[1mm]
\begin{shadedbox}
\label{as:noPA}
	There exists $\kappa$ such that, for all  $N$ and pairwise distinct vertices $v_0,\dots,v_\ell\in[N]$,
	$$\mathbb{P}\big\{ v_0\leftrightarrow v_1\leftrightarrow v_2\leftrightarrow \cdots \leftrightarrow v_\ell \big\}
\leq   \,\prod_{k=1}^{\ell} \kappa\, v_{k-1}^{-\gamma} \,v_k^{-\gamma}\, N^{2\gamma-1} .$$
\end{shadedbox}
\medskip

In configuration models with power law exponent $\tau$, Assumption~$\mathrm{CM}(\gamma)$ is 
typically satisfied for all $\gamma>(\tau-1)^{-1}$, and again we expect these networks to be 
ultrasmall if $\frac12<\gamma<1$.%
\medskip%

\begin{theorem}\label{thm:noPA}
Let $(\mathcal{G}_N)_{N\in\N}$ be a dynamic network model that satisfies Assumption~$\mathrm{CM}(\gamma)$ for 
	some $\gamma$ satisfying $\frac12<\gamma<1$, then, for random vertices $V$ and $W$ chosen independently and uniformly 
        from $[N]$, we have
	$$d_N(V,W)\geq 2\, \frac{\log\log N}{\log(\frac{\gamma}{1-\gamma})} + {\mathcal O}(1), \ \ 
        \text{ with high probability as $N\to\infty$.}$$
\end{theorem}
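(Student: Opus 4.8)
The plan is to use a first-moment (union-bound) argument over potential short paths joining the two random vertices $V$ and $W$, exploiting Assumption~$\mathrm{CM}(\gamma)$ to control the probability that any prescribed sequence of vertices forms a path. Fix a target length $\ell_N = 2\,\frac{\log\log N}{\log(\gamma/(1-\gamma))} + C$ for a suitable constant $C$ to be chosen negative enough; we want to show $\P\{d_N(V,W)\le \ell_N\}\to 0$. Since $V,W$ are uniform, it suffices to bound
$$
\sum_{\ell\le \ell_N}\ \sum_{v_0,\dots,v_\ell}\ \frac{1}{N^2}\,\P\{v_0\leftrightarrow\cdots\leftrightarrow v_\ell\}
\ \le\ \sum_{\ell\le\ell_N}\frac{1}{N^2}\sum_{v_0,\dots,v_\ell}\ \prod_{k=1}^\ell \kappa\, v_{k-1}^{-\gamma}v_k^{-\gamma}N^{2\gamma-1},
$$
where the inner sum runs over pairwise distinct vertices in $[N]$ (relaxing to all tuples only costs an upper bound). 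The key feature of the $\mathrm{CM}$ bound is that it factorizes cleanly at each vertex: an interior vertex $v_k$ (for $1\le k\le\ell-1$) carries weight $v_k^{-2\gamma}$, while the endpoints $v_0,v_\ell$ carry weight $v_0^{-\gamma},v_\ell^{-\gamma}$, and each edge contributes a factor $\kappa N^{2\gamma-1}$.

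The core estimate is the behaviour of the truncated sum $S_\alpha(N):=\sum_{v=1}^N v^{-\alpha}$: for $\alpha=2\gamma\in(1,2)$ this is $\Theta(1)$ (bounded), and for $\alpha=\gamma\in(\tfrac12,1)$ it is $\Theta(N^{1-\gamma})$. Summing the endpoints first gives each a factor $O(N^{1-\gamma})$, and summing each of the $\ell-1$ interior vertices gives a bounded factor; combined with the $\ell$ edge factors $\kappa N^{2\gamma-1}$ we would get a crude bound of order $N^{-2}\cdot N^{2(1-\gamma)}\cdot(\kappa N^{2\gamma-1})^\ell = \kappa^\ell N^{\ell(2\gamma-1)-2\gamma}$, which blows up. So the crude bound is not enough — one cannot simply sum every vertex over all of $[N]$. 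The standard remedy, which is the heart of the proof, is a \emph{layering / truncation} scheme on the labels: one introduces thresholds and argues that a typical short path cannot reach vertices with very small label (i.e.\ very high expected degree) too quickly. Concretely, define label-thresholds $\Gamma_j$ (the ``layers'') roughly by $\log\Gamma_j \approx c\,(\gamma/(1-\gamma))^{\,j}$ or equivalently $\Gamma_j = N^{\,a_j}$ with $a_j$ decaying appropriately, so that the number of relevant layers before one exhausts $[N]$ is $\sim \tfrac12\ell_N$; then one shows that to connect two typical (large-label) vertices the path must pass through these layers in order, and each \emph{pair} of consecutive layers costs at least two edges. This gives the factor $2$. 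One way to organize this rigorously: let $m$ be the number of steps; split the sum over $(v_0,\dots,v_m)$ according to the profile $(\lceil -\log_N v_0\rceil,\dots)$ of label-sizes, use the $\mathrm{CM}$ inequality to bound each block's contribution by a product of truncated sums $S_\gamma$ and $S_{2\gamma}$ over dyadic (or $N$-adic) label ranges, and check that the resulting bound, summed over all admissible profiles, is $o(1)$ precisely when $m < 2\,\frac{\log\log N}{\log(\gamma/(1-\gamma))} - O(1)$.

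The main obstacle — and the step requiring real care — is setting up the layer thresholds and the combinatorial bookkeeping so that: (i) the geometric recursion $\log\Gamma_{j+1}\approx \frac{\gamma}{1-\gamma}\log\Gamma_j$ genuinely emerges from balancing the edge factor $N^{2\gamma-1}$ against the vertex sums $v^{-\gamma}$ over a label-block; (ii) the number of profiles (which is at most something like $(m+1)^m$, a $\log N$-type quantity raised to a $\log\log N$ power) does not overwhelm the per-profile decay; and (iii) the endpoints $V,W$, being uniform, land in the ``outermost'' label range with probability close to $1$, so that the argument over typical vertices is not vacuous. Once the thresholds are chosen, the per-profile estimate is a routine geometric-series computation, and summing over $\ell\le\ell_N$ and over profiles yields the claimed high-probability lower bound. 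The structure of this argument is the ``configuration'' analogue of the proof of Theorem~\ref{thm:PA}; the difference is only that here a single step of the path can jump across one layer (weight $v_{k-1}^{-\gamma}v_k^{-\gamma}N^{2\gamma-1}$ is symmetric in the two endpoints and does not reward small$\,\wedge\,$small the way the $\mathrm{PA}$ bound's $(v_{k-1}\wedge v_k)^{-\gamma}(v_{k-1}\vee v_k)^{\gamma-1}$ does), which is exactly why the constant is $2$ rather than $4$.
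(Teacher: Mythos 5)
Your overall strategy---a first--moment bound over paths, truncated by label thresholds whose logarithms recur with ratio $\frac{\gamma}{1-\gamma}$---is the same as the paper's truncated first moment method, but the proposal stops exactly where the proof begins: everything you flag as ``the main obstacle'' or ``routine'' is the actual content of the argument, and none of it is carried out. Concretely: (1) the statement that a short path from a typical vertex ``cannot reach vertices with very small label too quickly'' is not an almost-sure constraint; a path can cross any threshold in one step with small but positive probability, so these excursions must be accounted for quantitatively. In the paper this is done by the crossing events $A_k^{\ssup v}$ in \eqref{tfme}, and the thresholds are chosen by the recursion \eqref{eq:noPAelldef} precisely so that $\sum_k \P(A_k^{\ssup v})\le\eps$ while the induction can still be run for $\delta$ steps; the balance between these two competing demands is where the ratio $\frac{\gamma}{1-\gamma}$ and hence the constant in the theorem ``genuinely emerges'', and you name this as obstacle (i) without resolving it. (2) Your per-profile estimate and the combinatorial count of profiles (obstacle (ii)) are asserted, not checked; note that the paper never enumerates profiles at all---the vector recursion $\mu_{k+1}^{\ssup v}=\bar\mu_k^{\ssup v}\,\bP_N$ together with Lemma~\ref{lem:Pbound2} and the resulting bound \eqref{useful}, $\mu_k^{\ssup v}(m)\le m^{-\gamma}\ell_k^{\gamma-1}$, sums over all admissible label sequences in one stroke, so the $(m+1)^m$-type factor you worry about simply does not appear, whereas in your bookkeeping it would have to be beaten by a per-profile decay you have not computed. (3) The final step, showing that with $\delta\le \frac{\log\log N}{\log(\gamma/(1-\gamma))}-C$ one still has $\ell_\delta\ge2$ and that the midpoint sum \eqref{thissum} is $o(1)$, requires the doubly exponential growth bound on $\eta_k=N/\ell_k$, which again has to be derived from the threshold recursion; it is not implied by positing $\log\Gamma_j\approx c\,(\gamma/(1-\gamma))^j$ in advance.

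There is also a conceptual wobble in your justification of the constant: the claim that ``each pair of consecutive layers costs at least two edges'' is wrong under Assumption $\mathrm{CM}(\gamma)$, where a single edge can join two consecutive layers (as your own closing paragraph correctly observes). The factor $2$ does not come from two edges per layer transition; it comes from the fact that the hierarchy must be traversed from both endpoints, which in the proof corresponds to splitting the path at its midpoint as in \eqref{thissum} and requiring $\approx\frac{\log\log N}{\log(\gamma/(1-\gamma))}$ steps from each of $v$ and $w$ before labels of order $O(1)$ can be reached. The arithmetic in your sketch lands on the right answer, but the mechanism as stated is the one relevant to the $\mathrm{PA}$ case, and a proof built on that heuristic would organise the truncation incorrectly. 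In summary, the proposal is a correct identification of the strategy but not a proof: the threshold recursion, the propagation lemma, and the final balancing of $\delta$ against $\ell_\delta\ge2$ are all missing.
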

\medskip


Examples of network models, in which Theorem~\ref{thm:noPA} can be applied, will be given as Examples~3--5 in Section~3. 
They comprise a variety of configuration models with power law exponent $\tau\in(2,3)$. 
In all these cases Assumption~$\mathrm{CM}(\gamma)$ is satisfied for all $\gamma>(\tau-1)^{-1}$, and the theorem implies that  
$$d_N(V,W)\geq\big(2+o(1)\big)\frac{\log\log N}{-\log(\tau-2)}, \ \ \text{ with high probability as $N\to\infty$.}$$ 
Again, in all examples matching upper bounds are known from the literature.
\medskip

The proof of both theorems is based on a \emph{constrained} or \emph{truncated first order method}, which we now briefly explain. 
We start with an explanation of the (unconstrained) first moment bound and its shortcomings.  
Let $v$, $w$ be distinct vertices of $\cG_N$. Then, for $\delta\in\N$,
 \begin{align*}
\P\{d_N(v,w)\le 2\delta\} &=\P\Big( \bigcup_{k=1}^{2\delta} \bigcup_{(v_1,\dots, v_{k-1})} \{v \leftrightarrow v_1 \leftrightarrow 
\dots \leftrightarrow v_{k-1}\leftrightarrow w\}\Big)\\
&\le  \sum_{k=1}^{2\delta} \sum_{(v_1,\dots, v_{k-1})} \prod_{j=1}^{k} p({v_{j-1},v_j}),
\end{align*}
where $(v_0,\dots, v_{k})$ is any collection of pairwise distinct vertices in $\cG_N$ with $v_0=v$ and $v_{k}=w$
and, for $m,n\in\N$,
$${p}(m,n):=\left\{ \begin{array}{ll}
\kappa (m\wedge n)^{-\gamma} (m\vee n)^{\gamma-1} & \mbox{if $\mathrm{PA}(\gamma)$ holds;}\\
\kappa\, m^{-\gamma} \, n^{-\gamma}\, N^{2\gamma-1}& \mbox{if $\mathrm{CM}(\gamma)$ holds.}\\
\end{array}\right.$$
Note that one can assign each path $(v_0,\dots,v_k)$ the weight
\begin{equation}\label{weight}
p(v_0,\dots,v_k):=\prod_{j=1}^k p(v_{j-1},v_j),
\end{equation}
and the upper bound is just the sum over the weights of all paths from~$v$ to~$w$ of length no more than $2\delta$. 
The shortcoming of this bound is that the paths that contribute most to the total weight are those that connect 
$v$, resp.\ $w$, quickly to vertices with extremely small indices. Since these are typically 
not present in the network, such paths have to be removed in order to get a reasonable estimate.
\medskip

\pagebreak[3]

To this end we define a decreasing sequence $(\ell_k)_{k=0,\dots, \delta}$ of positive integers and consider a tuple of vertices $(v_0,\dots,v_n)$ 
as \emph{admissible}  if $v_k\wedge v_{n-k}\ge \ell_k$ for all $k\in\{0,\dots,\delta \wedge n\}$.
We denote by $A_k^\ssup{v}$ the event that there exists a path 
$v=v_0\leftrightarrow \cdots \leftrightarrow v_k$ in the network such that $v_0\ge \ell_0,\dots, v_{k-1}\ge \ell_{k-1}$, $v_k<\ell_k$, i.e. a path
that traverses the threshold after exactly $k$ steps.  For fixed vertices $v,w\geq \ell_0$, the truncated first moment estimate is 
\begin{align}\label{tfme}
\P\{d_N(v,w)\le 2\delta\}\le \sum_{k=1}^\delta \P(A_k^\ssup{v}) +\sum_{k=1}^\delta \P(A_k^\ssup{w}) + 
\sum_{n=1}^{2\delta} \sum_{\heap{(v_0,\ldots,v_{n})}{\text{admissible}}} \P\big\{ v_0 \leftrightarrow \cdots \leftrightarrow v_n \big\},
\end{align}
where the admissible paths in the last sum start with $v_0=v$ and end with $v_n=w$.  
By assumption, 
$$\P\{v_0 \leftrightarrow \cdots \leftrightarrow v_n\} \le p (v_0,\ldots,v_n)$$
so that for $v\ge \ell_0$ and $k=1,\dots,\delta$,
\begin{equation}\label{majorant}
\P(A_k^{\ssup v}) \le \sum_{v_1=\ell_1}^N \dots  \sum_{v_{k-1}=\ell_{k-1}}^N \sum_{v_k=1}^{\ell_k-1 }  p(v,v_1,\dots,v_{k}).
\end{equation}
Given $\eps>0$ we choose $\ell_0=\lceil\eps N\rceil$ and $(\ell_j)_{j=0,\dots, k}$ decreasing fast enough so that the first two summands 
on the right hand side of~\eqref{tfme} together are no larger than $2\eps$.  For $k\in\{1,\dots,\delta\}$, set  
$$\mu_k^\ssup{v} (u):= \ind_{\{v\ge \ell_0\}} \sum_{v_1=\ell_1}^N\dots  \sum_{v_{k-1}=\ell_{k-1}}^N p({v,v_1,\dots,v_{k-1},u}),$$
and set $\mu_0^\ssup{v}(u)=\ind_{\{v=u\}}$. To rephrase the truncated moment estimate in terms of~$\mu$, note that $p$ is 
symmetric so that, for all $ n\le 2\delta$ and $n^*:=\lfloor n/2\rfloor$, 
\begin{align}
\sum_{\heap{(v_0,\ldots,v_{n})}{\text{admissible}}} \P\big\{ v_0 \leftrightarrow \cdots \leftrightarrow v_n \big\}
&\le  \sum_{v_1=\ell_1}^N\dots \sum_{v_{n^*}=\ell_{n^*}}^N \dots \sum _{v_{n-1}=\ell_1}^{N} p(v,\dots, v_{n^*}) p(v_{n^*},\dots,w)\notag\\
&=   \sum_{v_{n^*}=\ell_{n^*}}^N \mu_{n^*}^\ssup{v} (v_{n^*}) \mu_{n-n^*}^\ssup{w}(v_{n^*}).\label{thissum}
\end{align}
Using the recursive representation 
$$\mu_{k+1}^{\ssup v}(n)=\sum_{m=\ell_k}^N \mu_{k}^\ssup {v}(m) \,p(m,n)$$   
we establish upper bounds for $\mu_k^\ssup{v} (u)$, and use these to show that the rightmost term in~\eqref{tfme} remains small
if $\delta$ is chosen sufficiently small. Using the input from Assumptions~$\mathrm{PA}(\gamma)$, resp.~$\mathrm{CM}(\gamma)$, 
this will lead to the lower bounds for the typical distance in both theorems. Detailed proofs will be given in Section~4.

\section{Examples}

In this section we give five examples, corresponding to the best understood models of ultrasmall networks
in the mathematical literature. Examples~1--2 are of preferential attachment type and will be discussed using our main result, Theorem~\ref{thm:PA}, while Examples~3--5 are of
configuration type and will be discussed using Theorem~\ref{thm:noPA}.%
\medskip

\begin{example}[Preferential attachment with fixed outdegree]\label{exp:affpa}

This class of models is studied in the work of Hooghiemstra, van der Hofstad and coauthors. We base our discussion on the paper~\cite{DHH10},
where three qualitatively similar models are considered, see also \cite{H11} for a survey. We focus on the first model 
studied in \cite{DHH10},  which is most convenient to define, the two variants can be treated with the same method. The model depends
on two parameters, an integer $\emm\ge 1$ and a real $\del>-\emm$. 
Roughly speaking,  in every step a new vertex  is added to the network and connected to $\emm$ existing vertices with a probability proportional to their
degree plus $\del$. Note that in the case $\emm=1$ the network has the metric structure of a tree, making this a degenerate case of less interest. 
The case famously studied by Bollob\'{a}s and Riordan~\cite{BR04} corresponds to $\del=0$ and $\emm\ge 2$ and leads to a network with $\tau=3$ and typical 
distance $\log N/\log \log N$, so that it lies outside the class of ultrasmall networks.
\smallskip

We first generate a dynamic network model $(\mathcal{G}_N)$ for the case $\emm=1$. By $Z[n,N]$,  $n\leq N$, 
we denote the degree of vertex~$n$ in  $\mathcal{G}_N$ (with the convention that self-loops add two
towards the degree of the vertex to which they are attached).
\begin{itemize}
\item $\mathcal G_1$ consists of a single vertex, labelled~$1$, with one self loop.
\item In each further step, given $\mathcal{G}_N$, we insert one new vertex, labelled $N+1$, and one new edge into the 
network such that the new edge connects the new vertex to vertex $m\in[N]$ with probability 
$$
\mathbb{P}\big\{m\leftrightarrow N+1\,\big|\, \mathcal{G}_N\big\}=\frac{Z[m,N]+\del}{N(2+\del)+1+\del},$$ 
or to itself with probability 
$$
\frac{1+\del}{N(2+\del)+1+\del}.$$
\end{itemize}
To generalise the model to arbitrary values of $\emm$, we take the graph $\mathcal{G}'_{\emm N}$ constructed using  parameters $\emm'=1$ and
$\del'=\del/\emm$, and merge vertices $\emm (k-1)+1,\dots,\emm k$ in the graph $\mathcal{G}'_{\emm N} $ into 
a single vertex denoted~$k$,  keeping all edges. 
We obtain asymptotic degree distributions which are power laws with 
exponent $\tau=3+\frac{\del}{\emm}$, so that we expect to be in the ultrasmall 
range if and only if $-\emm<\delta<0$.
\smallskip

\begin{prop}
For independent, uniformly chosen vertices $V$ and $W$ in the giant component of the preferential attachment model
with parameters $\emm\geq 2$ and $-\emm<\delta<0$ , we have
$$d_N(V,W) = (4+o(1)) \,  \frac{\log\log N}{-\log(1+\frac{\del}{\emm})}  \qquad \mbox{ with high probability. }$$
\end{prop}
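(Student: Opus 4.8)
The plan is to derive the lower bound from Theorem~\ref{thm:PA} and to quote the matching upper bound from~\cite{DHH10}, so that the real work lies in checking that the model satisfies Assumption~$\mathrm{PA}(\gamma)$. Writing $\tau=3+\tfrac{\del}{\emm}$ one has $(\tau-1)^{-1}=(2+\tfrac{\del}{\emm})^{-1}$, and $-\emm<\del<0$ forces this number into $(\tfrac12,1)$; hence, once we know that Assumption~$\mathrm{PA}(\gamma)$ holds for every $\gamma>(2+\tfrac\del\emm)^{-1}$, Theorem~\ref{thm:PA} applies for all $\gamma$ in the nonempty interval $\big((2+\tfrac\del\emm)^{-1},1\big)$ and yields $d_N(V,W)\ge 4\log\log N/\log\tfrac{\gamma}{1-\gamma}+\Ocal(1)$ with high probability for $V,W$ uniform on $[N]$. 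Since $d_N(V,W)$ is infinite unless both endpoints lie in the giant component, and the giant component has asymptotically positive density, the same bound holds with high probability for $V,W$ uniform on the giant component. Letting $\gamma\downarrow(2+\tfrac\del\emm)^{-1}$ one has $\tfrac{\gamma}{1-\gamma}\to(1+\tfrac\del\emm)^{-1}$, so $\log\tfrac{\gamma}{1-\gamma}\to-\log(1+\tfrac\del\emm)$, and a routine limiting argument over a sequence $\gamma_j\downarrow(2+\tfrac\del\emm)^{-1}$ upgrades this to $d_N(V,W)\ge(4-o(1))\,\log\log N/(-\log(1+\tfrac\del\emm))$; together with the upper bound of~\cite{DHH10} this is the claim.

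It remains to establish Assumption~$\mathrm{PA}(\gamma)$, and first I would reduce to the basic case $\emm=1$. By construction $\cG_N$ arises from the graph $\cG'_{\emm N}$ with parameters $\emm'=1$, $\del'=\del/\emm$ by contracting the blocks $B_i=\{\emm(i-1)+1,\dots,\emm i\}$, $i\in[N]$, so an edge $v_{k-1}\leftrightarrow v_k$ in $\cG_N$ means that $\cG'_{\emm N}$ has at least one edge between $B_{v_{k-1}}$ and $B_{v_k}$. Union-bounding over the (at most $\emm$) endpoints used in each visited block, the event $\{v_0\leftrightarrow\cdots\leftrightarrow v_\ell\}$ is covered by at most $\emm^{2\ell}$ events, each asserting that a fixed set of $\ell$ distinct edges of $\cG'_{\emm N}$ is present; since every label of $B_i$ lies between $\emm(i-1)+1$ and $\emm i$, passing from labels to block indices distorts the quantities $(\cdot\wedge\cdot)^{-\gamma}(\cdot\vee\cdot)^{\gamma-1}$ only by bounded factors. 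Consequently it suffices to bound, in the $\emm=1$ model with parameter $\del'$, the probability that a prescribed collection of at most $\ell$ edges is present by $\prod\kappa_1\,(a\wedge b)^{-\gamma}(a\vee b)^{\gamma-1}$, the product running over the edges $\{a,b\}$ of the collection; the $\emm^{2\ell}$ configurations and the block distortions are then absorbed into a new constant $\kappa$.

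For the $\emm=1$ tree set $\chi=(2+\del')^{-1}=(2+\tfrac\del\emm)^{-1}$; we must reach every $\gamma>\chi$. The two ingredients I would use are: (i) the classical estimate $\E[Z[m,n]+\del']\le c\,(n/m)^{\chi}$ for the degree process, together with the explicit conditional rule $\P\{O_n=m\mid\cG'_{n-1}\}=(Z[m,n-1]+\del')/\big((n-1)(2+\del')+1+\del'\big)$ for the out-neighbour $O_n$ of a new vertex $n$; and (ii) a sequential revelation of the tree, vertex by vertex, in which a prescribed edge $\{m,n\}$ with $m<n$ is decided at step $n$ by the event $\{O_n=m\}$, so that the probability of the whole collection factorises along this order into conditional probabilities, each of the form $(Z[m,n-1]+\del')/\big((n-1)(2+\del')+1+\del'\big)$. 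Bounding the latter by (i) produces a factor of order $(n/m)^{\chi}/n\le\kappa_1\,m^{-\gamma}n^{\gamma-1}$, where the slack $\gamma-\chi>0$ absorbs subpolynomial corrections.

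The hard part is that these conditional factors involve the \emph{random, conditioned} degrees $Z[m,n-1]$: attachment decisions in preferential attachment are genuinely dependent, both through the common normalisation and through the feedback of past choices into current degrees, so the clean product bound demanded as input to Theorem~\ref{thm:PA} is not automatic. The saving observation is structural --- along the prescribed collection (a path of blocks, lifted to the $\emm=1$ tree) each vertex is incident to at most two of the prescribed edges, so conditioning on the remaining prescribed edges raises any single relevant degree $Z[m,n-1]$ by at most an additive constant, while the degree process otherwise evolves autonomously. Quantifying this --- for instance via the P\'olya urn representation of the $\emm=1$ tree, in which the attachment choices become conditionally independent given urn weights $W_m=(1+o(1))\,m^{-\chi}$ that concentrate uniformly in $m$, with the bad event contributing negligibly --- is the technical heart of the verification. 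Everything else, namely the reduction to $\emm=1$, the bookkeeping of exponents, and the passage from $\mathrm{PA}(\gamma)$ to the stated asymptotics, is routine given Theorem~\ref{thm:PA} and~\cite{DHH10}.
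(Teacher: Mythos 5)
Your overall architecture coincides with the paper's: quote the upper bound from \cite{DHH10}, reduce the general case to $\emm=1$ by a union bound over the $\leq \emm^{2\ell}$ ways a path of contracted blocks can be realised, establish the one-edge estimate $\P\{m\leftrightarrow n\}\leq \kappa_1 m^{-\gamma}n^{\gamma-1}$ from $\E Z[m,n]\asymp (n/m)^{1/(2+\del')}$, and feed Assumption $\mathrm{PA}(\gamma)$ into Theorem~\ref{thm:PA} (working with every $\gamma>(2+\frac{\del}{\emm})^{-1}$ and letting $\gamma$ decrease is fine, even though the paper verifies the assumption at $\gamma=(2+\frac{\del}{\emm})^{-1}$ exactly). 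However, the step you yourself call the technical heart is precisely the content of the paper's verification, and your sketch of it does not hold up. The dangerous configurations are the events $\{v\leftrightarrow m\leftrightarrow w\}$ with $m<v,w$, where an old vertex receives two of the prescribed edges. Conditioning on the first such edge does \emph{not} merely raise the relevant degree by an additive constant: the conditional law of $Z[m,\cdot]$ given $\Delta Z_{n_1}=1$ is size-biased (weighted by $Z+\del'$), so bounding $\E\big[Z_{n_2}\,\big|\,\Delta Z_{n_1}=1\big]$ by a constant multiple of the unconditional expectation requires a second-moment estimate on the degree process. The paper supplies exactly this: it first invokes the nonpositive-correlation statement of \cite[Lemma 2.1]{DHH10} to factorise the path probability into single-edge factors and ``cherry'' factors $\{v\leftrightarrow m\leftrightarrow w\}$ with $m$ minimal, and then proves $\P\{v\leftrightarrow m\leftrightarrow w\}\leq \kappa_2\,v^{\gamma-1}w^{\gamma-1}m^{-2\gamma}$ via the Markov jump processes $Z^{\ssup{k,m}}$ together with $\E\big[(Z^{\ssup{2,m}}_{n_1-})^2\big]\lesssim (n_1/m)^{2\gamma}$. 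Neither ingredient appears in your proposal, so the product bound demanded by $\mathrm{PA}(\gamma)$ is not actually established.

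The alternative quantification you suggest is also doubtful as described. In the P\'olya-urn representation the weights are not $(1+o(1))m^{-\chi}$ uniformly in $m$; they carry nondegenerate random factors. More importantly, Assumption $\mathrm{PA}(\gamma)$ must hold for every $N$ and every tuple of vertices with a single constant $\kappa$, so splitting off a ``bad event'' of fixed small probability cannot be absorbed: for tuples whose target bound $\prod_k \kappa (v_{k-1}\wedge v_k)^{-\gamma}(v_{k-1}\vee v_k)^{\gamma-1}$ is much smaller than the probability of the bad event, the argument yields nothing. So the gap is genuine: either import the correlation lemma of \cite{DHH10} and carry out the size-biased second-moment computation for the two-edges-into-one-old-vertex estimate (the paper's route), or replace your ``additive constant'' claim in the sequential revelation by a proved conditional estimate of the same strength; as written, the verification of $\mathrm{PA}(\gamma)$ is missing its key step.
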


\begin{rem}{\rm
The upper bound is proved in \cite{DHH10}, see the remark following Theorem~1.6. This paper leaves the 
problem of finding a lower bound open.  We resolve this problem by verifying Assumption $\mathrm{PA}(\gamma)$ 
for $\gamma=(2+\frac{\del}{\emm})^{-1}$ and applying Theorem~\ref{thm:PA}.}
\end{rem}

\begin{proof}
We look at $\emm=1$ first. In this case, we have, for $1\leq m< n\leq N$, 
\begin{equation}\label{copro}
\mathbb{P}\{m\leftrightarrow n\} =\frac{\E Z[m,n-1]+\del}{n(2+\del)-1}.
\end{equation}
It is easy to see that 
$$\E\big[Z[m,n]+\del \, \big| \, Z[m,n-1]\big]=
\big(Z[m,n-1]+\del\big) \, \frac{n(2+\del)}{n(2+\del)-1},$$ and hence 
$$\E\big[Z[m,n]+\del\big] 
= (1+\del)\, \frac{\Gamma(n+1)\Gamma(m-\frac1{2+\del})}{\Gamma(n+\frac{1+\del}{2+\del})\Gamma(m)}.$$
In particular there exist constants $0<c<C$ such that
$$c\, \Kl\frac{n}{m}\Kr^{\frac{1}{2+\del}} \leq \E Z[m,n] \leq C\, \Kl\frac{n}{m}\Kr^{\frac{1}{2+\del}} \qquad \mbox{ for all }1\leq m < n.$$ 
Combining this with~\eqref{copro} yields, for $\gamma=\frac{1}{2+\delta}$ and a suitable $\kappa_1>0$, that
\begin{equation}\label{ersterschritt}
\P\{ m \leftrightarrow n \} \leq \frac{C\,(n/m)^\gamma + \del}{n(2+\del)-1} 
\leq \kappa_1 n^{\gamma-1} m^{-\gamma}\quad \mbox{ for all }1\leq m < n.
\end{equation}
To verify $\mathrm{PA}(\gamma)$, following \cite[Lemma 2.1]{DHH10}
we find that for distinct vertices $v_0, \ldots, v_l$ all events of the form 
$\{v_{j-1} \leftrightarrow v_j \leftrightarrow v_{j+1}\}$ with $j\in\{1,\dots,l-1\}$ and $v_j<v_{j-1},v_{j+1}$, and all 
events  $\{v_{j-1}\leftrightarrow  v_j\}$ which are not part of these,  are nonpositively correlated, in the sense that the probability of
all of them occurring is smaller than the product of the probabilities.
Recalling also~\eqref{ersterschritt} it remains to show that for $m<v,w$,
\begin{equation}\label{zweiterschritt}
\mathbb{P}\{v \leftrightarrow m \leftrightarrow w\} \leq \kappa_2\, v^{\gamma-1} w^{\gamma-1} m^{-2\gamma},
\end{equation}
for some finite constant $\kappa_2>0$. 
To this end we let $\{(Z^{\ssup{k,m}}_n)_{n\geq m} \colon  k,m\in\mathbb{N}\}$ 
denote the collection of right-continuous Markov jump processes starting at $Z^{\ssup{k,m}}_{m-}=k$, jumping instantly
at time $m$ and subsequently at integer time-steps following the rule 
$$\mathbb{P}\big\{Z^{\ssup{k,m}}_{n}=Z^{\ssup{k,m}}_{n-}+1\, \big|\, Z^{\ssup{k,m}}_{n-}\big\}=
\frac{Z^{\ssup{k,m}}_{n-}+\del}{n(2+\del)-\del}=1-\mathbb{P}\big\{Z^{\ssup{k,m}}_{n}=Z^{\ssup{k,m}}_{n-}\,\big|\,Z^{\ssup{k,m}}_{n-}\big\}.$$
Note that $(Z[m,n])_{n\ge m}=(Z^{\ssup{1,m}}_n)_{n\ge m}$ in law and that, for $m<n$, the event $\{m\leftrightarrow n\}$ corresponds to
$\{\Delta Z^{\ssup{k,m}}_n=1\}$, where we write $\Delta Z^{\ssup{k,m}}_n := Z^{\ssup{k,m}}_n-Z^{\ssup{k,m}}_{n-}$. 
Note also that $Z^{\ssup{k_0,m}}_n$ is stochastically dominated by $Z^{\ssup{k,m}}_n$ for $k\geq k_0$. Hence, for $m< n_1<n_2$,
\begin{align*}
\mathbb{E}\big[Z^{\ssup{2,m}}_{n_2}\,\big| \, \Delta Z^{\ssup{2,m}}_{n_1}=1\big] 	& = \sum_{j=2}^{m-n_2+2}\sum_{k=2}^{m-n_1+1}j
\, \mathbb{P}\{Z^{\ssup{2,m}}_{n_2}=j\,|\, Z^{\ssup{2,m}}_{n_1-}=k, \Delta Z^{\ssup{2,m}}_{n_1}=1\}\\
&\phantom{rubadubdubdidoodlangldidledoh!}\times\mathbb{P}\{Z^{\ssup{2,m}}_{n_1-}=k\,|\, \Delta Z^{\ssup{2,m}}_{n_1}=1\}\\
& \leq \sum_{j=2}^{m-n_2+2}\sum_{k=2}^{m-n_1+1}\frac{j\,\mathbb{P}\{Z^{\ssup{k+1,n_1}}_{n_2}=j\}
\,(k+\del)\,\mathbb{P}\{Z^{\ssup{2,m}}_{n_1-}=k\}}{(n_1(2+\del)+1+\del)\,\mathbb{P}\{\Delta Z^{\ssup{2,m}}_{n_1}=1\}}\\
	& = \sum_{k=2}^{m-n_1+1}\frac{(k+\del)\,\mathbb{P}\{Z^{\ssup{2,m}}_{n_1-}=k\}\,\mathbb{E}Z^{\ssup{k+1,n_1}}_{n_2}}
	{(n_1(2+\del)+1+\del)\,\mathbb{P}\{\Delta Z^{\ssup{2,m}}_{n_1}=1\}}.
\end{align*}
As in the derivation of~\eqref{ersterschritt} the expectation in the last line can be bounded from
above by $c_0(k+1)n_2^{\gamma}n_1^{-\gamma}$, for some $c_0>0$. Similarly, we obtain
$\mathbb{P}\{\Delta Z^{\ssup{2,m}}_{n_1}=1\}\geq c_1 n_1^{\gamma-1}m^{-\gamma}$ and 
$$\mathbb{E}\big[(Z^{\ssup{2,m}}_{n_1-})^2\big]\leq c_2 \, m^{-\frac{2}{2+\del}}n_1^{\frac{2}{2+\del}},$$
for further constants $c_1,c_2>0$. Summarising, we obtain
\begin{align*}
\mathbb{E}\big[Z^{\ssup{2,m}}_{n_2} \, \big| \,\Delta Z^{\ssup{2,m}}_{n_1}=1\big]
\leq c_3 n_2^{\gamma}n_1^{-2\gamma}m^{\gamma}\sum_{k=2}^{m-n_1+1}k^2
\mathbb{P}\big\{Z^{\ssup{2,m}}_{n_1-}=k\big\} \leq c_4 \, n_2^{\gamma}m^{-\gamma},
\end{align*}
for some $c_3, c_4>0$, and this establishes~\eqref{zweiterschritt}.
Finally, passing from $\emm=1$ to general~$\emm$ can be achieved by a simple union bound. 
\end{proof} 
\end{example}
\pagebreak[3]

A different class of  preferential attachment models was introduced in \cite{DM09} and further studied in \cite{DM10}. Here a new vertex 
is connected to any existing vertex independently with a probability depending (possibly nonlinearly) on its degree. In this model the number 
of edges  created in every step is asymptotically Poisson distributed. 
\medskip

\begin{example}[Preferential attachment with variable outdegree]\label{exp:nolpa}
This model is studied in the work of Dereich, M\"orters and coauthors, see~\cite{DM11} for a survey. The model depends
on a concave function $f\colon \N\cup\{0\} \to (0,\infty)$, which is called the \emph{attachment rule}. Roughly speaking, in 
every step a new vertex is added to the network and oriented edges from the new vertex to existing vertices are introduced
independently with a probability proportional to the current degree of the existing vertex. 
\smallskip

More precisely, to generate a dynamic network model $(\mathcal{G}_N)$ we assume that $f$ satisfies $f(0)\leq 1$ and
$f(1)-f(0)<1$. An important parameter derived from $f$ is the limit
$$\gamma:= \lim_{n\to \infty} \frac{f(n)}{n},$$
which always exists with $0\leq \gamma<1$, by concavity.
By $Z[n,N]$,  $n\leq N$, we denote 
the number of \emph{younger} vertices to which vertex~$n$ is connected in $\mathcal G_N$.
\begin{itemize}
\item $\mathcal G_1$ consists of a single vertex, labelled~$1$, and no edges.
\item In the $(N+1)$st step, given $\mathcal{G}_N$, we insert one new vertex, labelled $N+1$, and independently
for any $m\in[N]$ we introduce an edge from $N+1$ to $m$ with probability $$\frac{f(Z[m,N])}{N}.$$
\end{itemize}
By \cite[Theorem 1.1(b)]{DM09} the conditional distribution given $\mathcal G_N$ of the
number of edges  created in the $(N+1)$st step converges to a Poisson distribution and 
the empirical distribution of the degrees converges to a power law with exponent $\tau=1+\frac{1}{\gamma}$,
or more precisely to a random probability vector $(\mu_k)$ satisfying
$$\lim_{k\to\infty}\frac{\log \mu_k}{\log k}= 1+\frac{1}{\gamma}.$$
We therefore expect the network to be ultrasmall if and only if $\gamma>\frac12$.
\smallskip

\begin{prop}
For independent, uniformly chosen vertices $V$ and $W$ in the giant component of the preferential attachment model
with attachment rule~$f$ and derived parameter $\gamma>\frac12$, we have
$$d_N(V,W) = (4+o(1)) \,  \frac{\log\log N}{\log(\frac\gamma{1-\gamma})}  \qquad \mbox{ with high probability. }$$
\end{prop}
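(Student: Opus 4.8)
The plan is to read the lower bound off Theorem~\ref{thm:PA} and to take the matching upper bound from the literature; as in Example~\ref{exp:affpa} the latter bound, $d_N(V,W)\le(4+o(1))\,\frac{\log\log N}{\log(\frac\gamma{1-\gamma})}$, is already available for this class of models (see, e.g., \cite{DM10, DM11}, or argue as in \cite{DHH10}). So everything reduces to verifying Assumption~$\mathrm{PA}(\gamma')$ for every $\gamma'\in(\gamma,1)$. Granting this, note that such a $\gamma'$ also satisfies $\frac12<\gamma'<1$ (because $\gamma>\frac12$), so Theorem~\ref{thm:PA} applies and gives $d_N(V,W)\ge 4\,\frac{\log\log N}{\log(\frac{\gamma'}{1-\gamma'})}+{\mathcal O}(1)$ with high probability; letting $\gamma'\downarrow\gamma$ along a sequence, and using that $x\mapsto x/(1-x)$ is continuous and that the ${\mathcal O}(1)$ is negligible against $\log\log N$, we obtain $d_N(V,W)\ge(4+o(1))\,\frac{\log\log N}{\log(\frac\gamma{1-\gamma})}$ whp. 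Finally, conditioning $V,W$ to lie in the giant component changes the relevant probabilities only by a factor bounded away from $0$ and $\infty$, so the bound persists.

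To verify $\mathrm{PA}(\gamma')$ we use the conditional independence built into the model: in step $N+1$ the edges from $N+1$ to the vertices $m\in[N]$ are introduced independently given $\mathcal G_N$, each with probability $f(Z[m,N])/N$. Two estimates suffice. First, a one--edge bound: since $f$ is concave with $f(n)/n\to\gamma<\gamma'$ we have $f(n)\le C_{\gamma'}+\gamma' n$, and the degree process obeys $\E Z[m,n]\le C\,(n/m)^{\gamma'}$ by a Gr\"onwall-type recursion; hence for $m<n$,
$$\P\{m\leftrightarrow n\}=\tfrac1n\,\E\big[f(Z[m,n-1])\big]\le\kappa_1\,n^{\gamma'-1}m^{-\gamma'}=\kappa_1\,(m\wedge n)^{-\gamma'}(m\vee n)^{\gamma'-1}.$$
Second, a two--edge bound for a shared \emph{old} vertex: for $m<v,w$,
$$\P\{v\leftrightarrow m\leftrightarrow w\}\le\kappa_2\,v^{\gamma'-1}w^{\gamma'-1}m^{-2\gamma'},$$
which is exactly the product of the two one--edge bounds. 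This is the analogue of~\eqref{zweiterschritt}. Its proof conditions on $\mathcal G_{v-1}$: given $\mathcal G_{v-1}$ one has $\{v\leftrightarrow m\}$ with probability $f(Z[m,v-1])/v$, on that event $Z[m,v]=Z[m,v-1]+1$, and afterwards $(Z[m,n])_{n\ge v}$ is again a process of the same type started from $Z[m,v-1]+1$, so that $\E[f(Z[m,w-1])\mid\mathcal G_{v-1},v\leftrightarrow m]\le C\,(Z[m,v-1]+1)(w/v)^{\gamma'}+C$. Taking expectations and using the second--moment estimate $\E[Z[m,v-1]^2]\le C\,(v/m)^{2\gamma'}$ (finite since $\gamma'<1$) yields the claimed inequality.

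With these two bounds, $\mathrm{PA}(\gamma')$ follows by the bookkeeping of \cite[Lemma 2.1]{DHH10}: along a path $v_0\leftrightarrow\cdots\leftrightarrow v_\ell$ of pairwise distinct vertices, group the two edges incident to each \emph{local minimum} of the index sequence $(v_j)$ (where a single old vertex of large degree favours both incident edges) into one ``cherry'' event and leave the remaining edges as single events; these grouped events are jointly nonpositively correlated, the only potentially harmful dependence---through the degree of a shared old vertex---having been isolated in the cherries, while, e.g., the two edges incident to a local \emph{maximum} are created in the same step and hence conditionally independent. Bounding cherries by the two--edge estimate and the other edges by the one--edge estimate, and using that the two--edge bound is the product of the two corresponding one--edge bounds, the whole product telescopes into $\prod_{k=1}^\ell\kappa\,(v_{k-1}\wedge v_k)^{-\gamma'}(v_{k-1}\vee v_k)^{\gamma'-1}$, which is $\mathrm{PA}(\gamma')$.

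The main obstacle is the two--edge (cherry) estimate: the reduction to Theorem~\ref{thm:PA}, the $\gamma'\downarrow\gamma$ passage and the giant--component conditioning are soft, and the one--edge bound is a routine first--moment growth estimate, but the cherry bound must control the interplay between the size--biasing induced by the first edge and the subsequent growth of the shared vertex's degree. It is the analogue, for the Poissonian variable--outdegree dynamics, of the computation carried out for the fixed--outdegree model in Example~\ref{exp:affpa}; the degree process there is an explicit Markov jump chain, here it is the $(Z[m,n])$ chain with Bernoulli increments of probability $f(Z[m,n])/n$, but the moment bounds required are of the same shape.
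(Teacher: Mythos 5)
Your proposal is correct and follows essentially the same route as the paper: verify Assumption $\mathrm{PA}(\gamma')$ for every $\gamma'>\gamma$ via the one--edge bound $\E f(Z[v,w-1])\le C\,w^{\gamma'}v^{-\gamma'}$ and the cherry estimate at local minima (conditioning on the shared old vertex's degree and using a second--moment bound), exploit the independence of edges with distinct older endpoints to factorize path probabilities, and then apply Theorem~\ref{thm:PA} and let $\gamma'\downarrow\gamma$, with the upper bound quoted from an adaptation of \cite{DHH10}. Your handling of the size-biasing in the cherry (restarting the degree chain from $Z[m,v-1]+1$ on the event $v\leftrightarrow m$) is, if anything, slightly more careful than the paper's stated identity, but the computation and the resulting bound are the same.
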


\begin{rem}{\rm
The upper bound can be proved by adapting the argument of~\cite{DHH10}, see the forthcoming thesis~\cite{M12} 
for details.  For the lower bound we verify Assumption $\mathrm{PA}(\gamma+\eps)$, for any $\eps>0$, and apply Theorem~\ref{thm:PA}.}
\end{rem}

\begin{proof} 
We first note that, for $v<w\in[N]$,
$$\mathbb{P}\{v\leftrightarrow w\}=\frac{\mathbb{E}f(Z[v,w-1])}{w-1}.$$ 
To estimate the expectation we note that by concavity, given $\eps>0$ there exists
$k$ such that, for all $n\geq k$, we have  $f(n)\leq f(k)+(\gamma+\eps) (n-k)$. An
easy calculation (see \cite[Lemma~2.7]{DM10}) shows that 
\begin{equation}\label{help}
\mathbb{E}f(Z[v,w-1])\leq C_1 w^{\gamma+\eps}v^{-\gamma-\eps}
\quad\mbox{for a suitable constant $C_1>0$.}
\end{equation}
We now use \eqref{help} to verify $\mathrm{PA}(\gamma+\eps)$. For $v< w \in [N]$, all events $\{v\leftrightarrow w\}$  
with different values of $v$ are independent. Hence $\P\{v_0 \leftrightarrow \cdots  \leftrightarrow  v_n\}$
can be decomposed into factors of the form $\mathbb{P}\{v_{j-1}\leftrightarrow v_j\leftrightarrow v_{j+1}\}$ with $v_j<v_{j-1},v_{j+1}$ and factors of the form
$\mathbb{P}\{v_{j-1} \leftrightarrow v_{j}\}$ for the remaining edges. It remains to estimate factors of the latter form. We may assume $v<u<w$ and get
$$\mathbb{P}\{u \leftrightarrow v\leftrightarrow w\} = \frac{\mathbb{E}[f(Z[v,u-1])f(Z[v,w-1])] }{(u-1)(w-1)}.$$ 
Arguing as in the derivation of \eqref{help} we get, for a suitable constant $C_2>0$,
$$\mathbb{E}\big[f(Z[v,w-1]) \, \big| \, Z[v,u-1] =k \big]  \leq C_2\, f(k)  w^{\gamma+\eps}u^{-\gamma-\eps}.$$
Hence 
$$\mathbb{E}\big[f(Z[v,u-1])f(Z[v,w-1])\big] \leq C_2\, \mathbb{E}\big[f(Z[v,u-1])^2\big]w^{\gamma+\eps}u^{-\gamma-\eps},$$
and, using a similar argument as above, we obtain $C_3>0$ such that
$$\mathbb{E}\big[f(Z[v,u-1])^2\big] \leq C_3 u^{2\gamma+\eps}v^{-2\gamma-\eps}.$$
Summarising, we obtain a constant $C_4>0$ such that
$$\mathbb{P}\{u \leftrightarrow v\leftrightarrow w\} \leq
C_4 u^{\gamma-1+\eps}v^{2\gamma-\eps} w^{\gamma-1+\eps},$$
as required to complete the proof.
\end{proof}
\end{example}
\medskip

We now give three examples of random networks in the universality class of configuration models. The first two belong to the
wide class of inhomogeneous random graphs, whose essential feature is the independence between different edges.
\medskip

\begin{example}[Expected degree random graph]\label{exp:irgdw}
This model is studied in the work of Chung and Lu, see~\cite{CL02} or~\cite{ChLu06}  for a survey. 
In its general form the model depends on a triangular scheme $w^{\ssup N}_1, \ldots, w^{\ssup N}_N$ of positive weights, 
where the weight $w^{\ssup N}_i$ plays the role of the expected degree of vertex $i$ in $\mathcal G_N$. 
The model is defined by the following two requirements:
\begin{itemize}
\item for every pair $(i,j)$ with $1\leq i\not=j \leq N$ the events $\{i \leftrightarrow j\}$ are independent, 
\item for every pair $(i,j)$ with $1\leq i\not=j \leq N$ we have 
$$\P\{ i \leftrightarrow j \} 
= \frac{w^{\ssup N}_i w^{\ssup N}_j}{\ell_N} \wedge 1 , \qquad
\mbox{ where } \ell_N:= \sum_{i=1}^N w^{\ssup N}_i.$$ 
\end{itemize}

\begin{prop}
For independent, uniformly chosen vertices $V$ and $W$ in the expected degree random graph with weights satisfying
$$c \, \big( \sfrac{N}{i}\big)^\gamma \leq w^{\ssup N}_i \leq C  \, \big( \sfrac{N}{i}\big)^\gamma \quad \mbox{ for all $1\leq i \leq N,$}$$
for some $\gamma>\frac12$ and constants $0<c\leq C$, we have
$$d_N(V,W) = (2+o(1)) \,  \frac{\log\log N}{\log(\frac\gamma{1-\gamma})}  \qquad \mbox{ with high probability. }$$
\end{prop}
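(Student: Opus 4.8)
The plan is to split the statement into its two inequalities: the lower bound I would obtain by checking that the expected degree random graph is covered by Theorem~\ref{thm:noPA}, and the matching upper bound I would quote from Chung and Lu~\cite{CL02}. For the first, I would begin by noting that, since $\frac12<\gamma<1$, the normalising constant is of order $N$: one has $\sum_{i=1}^N (N/i)^\gamma = N^\gamma\sum_{i=1}^N i^{-\gamma}\asymp N^\gamma N^{1-\gamma}=N$, and the two-sided weight bound transfers this to $\ell_N$, so that $c_0N\le\ell_N\le C_0N$ for suitable constants $0<c_0\le C_0$ and all $N$. Dropping the truncation at $1$ in the edge probability and using $w^{\ssup N}_i\le C(N/i)^\gamma$ together with $\ell_N\ge c_0N$ then yields, for all $1\le i\ne j\le N$,
$$\P\{i\leftrightarrow j\}\le\frac{w^{\ssup N}_iw^{\ssup N}_j}{\ell_N}\le\frac{C^2}{c_0}\,N^{2\gamma-1}i^{-\gamma}j^{-\gamma}.$$

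Because distinct edges are independent in this model, the probability of a self-avoiding path factorises over its edges, so for pairwise distinct $v_0,\dots,v_\ell\in[N]$
$$\P\{v_0\leftrightarrow v_1\leftrightarrow\cdots\leftrightarrow v_\ell\}=\prod_{k=1}^\ell\P\{v_{k-1}\leftrightarrow v_k\}\le\prod_{k=1}^\ell\kappa\,v_{k-1}^{-\gamma}v_k^{-\gamma}N^{2\gamma-1}$$
with $\kappa=C^2/c_0$; this is exactly Assumption~$\mathrm{CM}(\gamma)$. Theorem~\ref{thm:noPA} then gives $d_N(V,W)\ge 2\,\frac{\log\log N}{\log(\gamma/(1-\gamma))}+\mathcal O(1)$ with high probability for $V,W$ uniform on $[N]$. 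To pass to vertices drawn from the giant component I would use that the giant component contains a positive proportion of $[N]$ with high probability, so conditioning on $V$ and $W$ lying in it inflates probabilities only by a bounded factor and the bound persists; for vertices in different components the distance is infinite and the bound is vacuous. Note that, unlike in Examples~\ref{exp:affpa}--\ref{exp:nolpa}, no loss of $\eps$ in the exponent is incurred here, so the constant $2/\log(\gamma/(1-\gamma))$ comes out exactly.

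For the upper bound I would invoke Chung and Lu~\cite{CL02}, who prove $d_N(V,W)\le(2+o(1))\,\frac{\log\log N}{\log(\gamma/(1-\gamma))}$ for this class of inhomogeneous random graphs; their construction of short paths through the hierarchy of increasingly high-weight layers uses only the power-law bound on the weights up to constants, and so applies to the weights considered here. I do not expect a genuine obstacle: the independence of edges makes the verification of $\mathrm{CM}(\gamma)$ essentially immediate once $\ell_N\asymp N$ is in place, and the only step requiring a little care is confirming that the quoted upper bound really covers the constant-factor weight class rather than only exact power-law weights — if any gap remained there, one would reprove it by the standard layered path-counting/second-moment argument sketched in the introduction.
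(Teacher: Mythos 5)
Your proposal is correct and follows essentially the same route as the paper: bound $\P\{i\leftrightarrow j\}\le \kappa N^{2\gamma-1}i^{-\gamma}j^{-\gamma}$ using the weight bounds and $\ell_N\ge c_0 N$, use edge independence to get Assumption~$\mathrm{CM}(\gamma)$, apply Theorem~\ref{thm:noPA} for the lower bound, and quote Chung--Lu for the upper bound. The only superfluous step is the upper bound $\ell_N\le C_0N$ (and the attendant use of $\gamma<1$), which is not needed since only $\ell_N\ge c_0N$ enters the estimate.
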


\begin{proof}{}
The upper bound is sketched in~\cite{CL02}. For the lower bound we have to check Assumption~$\mathrm{CM}(\gamma)$.  
Note that, using the upper bound on the weights,
$$\P\{ i \leftrightarrow j \} \leq \sfrac{w^{\ssup N}_i w^{\ssup N}_j}{\ell_N} \leq C^2 \sfrac{N^{2\gamma}}{\ell_N} (ij)^{-\gamma}.$$ 
>From the lower bound on the  weights we get that $\ell_N\geq cN$, for some $c>0$, and hence
$\P\{ i \leftrightarrow j \} \leq \kappa N^{2\gamma-1} i^{-\gamma}j^{-\gamma}$ for a suitable $\kappa$.
Using the independence assumption we see that Condition~$\mathrm{CM}(\gamma)$ holds, and the lower bound follows from Theorem~2. 
\end{proof}
\end{example}

\begin{example}[Conditionally Poissonian random graph]\label{exp:norei}
This model is studied in the work of Norros and Reittu, see~\cite{NR06}. It is based on drawing an independent, identically
distributed sequence $\Lambda_1, \Lambda_2, \ldots$
of positive capacities. Conditional on this sequence, the dynamical network model is constructed as follows:

\begin{itemize}
\item $\mathcal G_1$ consists of a single vertex, labelled~$1$, and no edges.
\item In the $(N+1)$st step, given $\mathcal{G}_N$, we insert one new vertex, labelled $N+1$, and independently
for any $m\in[N]$ we introduce a random number of edges between $N+1$ and $m$ according to a Poisson distribution with
parameter $$\frac{\Lambda_i \Lambda_{N+1}}{L_{N+1}} \qquad \mbox{ for } L_{n}:=\sum_{k=1}^{n} \Lambda_k.$$
\item We further remove each edge in $\mathcal G_N$ independently with probability
$1- {L_N}/{L_{N+1}}$, and thus obtain $\mathcal G_{N+1}$.
\end{itemize}
Recall that having possibly several edges between two vertices has no relevance for the typical distances
in the giant component. In order to be in the ultrasmall regime we require the law of the capacities to be
power laws with exponent $2<\tau<3$.

\begin{prop}
Assume that the capacities in the conditionally Poissonian random graph satisfy
$$\P\{\Lambda_1>x\}= x^{1-\tau}\,(c+o(1)) \quad \mbox{ for all sufficiently large } x,$$
where $2<\tau<3$ and $c>0$ is constant.
For independent, uniformly chosen vertices $V$ and $W$ in the giant component we have
$$d_N(V,W) = (2+o(1)) \,  \frac{\log\log N}{-\log(\tau-2)}  \qquad \mbox{ with high probability. }$$
\end{prop}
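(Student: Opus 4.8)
The plan is to obtain the lower bound by reducing the model, conditionally on the capacities, to the inhomogeneous random graph of Example~\ref{exp:irgdw} and then invoking Theorem~\ref{thm:noPA}; the matching upper bound $d_N(V,W)\le(2+o(1))\frac{\log\log N}{-\log(\tau-2)}$ is due to Norros and Reittu~\cite{NR06}. Fix a small $\eps>0$ and set $\gamma:=(\tau-1)^{-1}+\eps$; since $2<\tau<3$ one has $(\tau-1)^{-1}\in(\tfrac12,1)$, so $\gamma\in(\tfrac12,1)$ for $\eps$ small, and $\log\!\big(\tfrac{\gamma}{1-\gamma}\big)\to-\log(\tau-2)$ as $\eps\downarrow0$. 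The starting point is the standard fact (see~\cite{NR06}) that, conditionally on the capacity sequence $\Lambda_1,\dots,\Lambda_N$, the numbers of edges between distinct pairs $i\neq j$ of $\mathcal G_N$ are independent and Poisson distributed with parameter $\Lambda_i\Lambda_j/L_N$, where $L_N:=\sum_{k=1}^N\Lambda_k$; in particular $\P\{i\leftrightarrow j\mid\Lambda\}\le\Lambda_i\Lambda_j/L_N$. Relabelling the vertices of each $\mathcal G_N$ so that $\Lambda_1\ge\Lambda_2\ge\dots\ge\Lambda_N$ does not change the law of $d_N(V,W)$, so from now on I take the capacities to be ordered decreasingly.

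The crux is an order-statistics estimate: there should be a constant $C=C(c,\tau,\eps)$ such that the event
$$\mathcal E_N:=\Big\{\Lambda_i\le C(N/i)^{\gamma}\ \text{for all}\ 1\le i\le N\Big\}\cap\big\{L_N\ge c_0N\big\}$$
has $\P(\mathcal E_N)\to1$, where $0<c_0<\E\Lambda_1$ (finite as $\tau>2$, so that the event $\{L_N\ge c_0N\}$ is typical by the law of large numbers). To control the order statistics one writes $\P\{\Lambda_i>C(N/i)^\gamma\}\le\P\{\mathrm{Bin}(N,p_i)\ge i\}$ with $p_i:=\P\{\Lambda_1>C(N/i)^\gamma\}$; whenever $C(N/i)^\gamma$ is large the tail hypothesis gives $p_i\le 2c\,C^{1-\tau}(i/N)^{1+\eps(\tau-1)}$, hence $Np_i\le 2cC^{1-\tau}i$, and taking $C$ large enough forces $Np_i\le i/(2e)$ and therefore $\P\{\mathrm{Bin}(N,p_i)\ge i\}\le(eNp_i/i)^i\le2^{-i}$; summing over $i$ — separating off the finitely many smallest indices, each contributing $o(1)$ as $N\to\infty$, from the geometrically small tail, and disposing of the bulk range where $i$ is of order $N$ by a crude direct estimate — then gives $\P(\mathcal E_N^{\mathrm c})\to0$. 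I expect this to be the main obstacle: the largest few capacities are genuinely spread out, the naive binomial union bound is not summable without the care just indicated, and it is exactly this slack that makes the argument work for $\gamma=(\tau-1)^{-1}+\eps$ rather than for $\gamma=(\tau-1)^{-1}$ itself.

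On $\mathcal E_N$ we get, for all distinct $i\neq j$, $\P\{i\leftrightarrow j\mid\Lambda\}\le\Lambda_i\Lambda_j/L_N\le\kappa\,N^{2\gamma-1}i^{-\gamma}j^{-\gamma}$ with $\kappa:=C^2/c_0$, and since the edges are conditionally independent, for pairwise distinct $v_0,\dots,v_\ell$,
$$\P\{v_0\leftrightarrow v_1\leftrightarrow\cdots\leftrightarrow v_\ell\mid\Lambda\}=\prod_{k=1}^{\ell}\P\{v_{k-1}\leftrightarrow v_k\mid\Lambda\}\le\prod_{k=1}^{\ell}\kappa\,v_{k-1}^{-\gamma}v_k^{-\gamma}N^{2\gamma-1},$$
so on $\mathcal E_N$ the relabelled $\mathcal G_N$ obeys Assumption~$\mathrm{CM}(\gamma)$ with a constant $\kappa$ independent of $N$ and of the realisation of $\Lambda$. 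The proof of Theorem~\ref{thm:noPA} is the truncated first moment estimate and uses the model only through this bound, so it applies verbatim conditionally on any $\Lambda\in\mathcal E_N$, uniformly over such $\Lambda$; with $\delta_N:=\frac{\log\log N}{\log(\gamma/(1-\gamma))}+\mathcal O(1)$ this yields
$$\P\{d_N(V,W)\le2\delta_N\}\le\P(\mathcal E_N^{\mathrm c})+\E\big[\one_{\mathcal E_N}\,\P\{d_N(V,W)\le2\delta_N\mid\Lambda\}\big]\longrightarrow0.$$
Hence $d_N(V,W)\ge2\,\frac{\log\log N}{\log(\gamma/(1-\gamma))}+\mathcal O(1)$ with high probability for $V,W$ uniform on $[N]$, and a fortiori, after conditioning on the event of probability bounded away from zero that both vertices lie in the giant component, for $V,W$ uniform in the giant component. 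Letting $\eps\downarrow0$ and combining with the Norros--Reittu upper bound gives $d_N(V,W)=(2+o(1))\frac{\log\log N}{-\log(\tau-2)}$ with high probability, as claimed.
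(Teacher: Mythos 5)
Your proposal is correct and follows the same overall reduction as the paper: relabel the vertices by decreasing capacity, use the conditional independence and the Poisson parameter $\Lambda_i\Lambda_j/L_N$ from \cite{NR06} to reduce Assumption $\mathrm{CM}(\gamma)$ to an order-statistics bound $\Lambda^{\ssup i}_N\leq \mathrm{const}\cdot(N/i)^\gamma$ plus $L_N\geq c_0N$, apply Theorem~\ref{thm:noPA} conditionally on the good event, and quote \cite{NR06} for the matching upper bound. Where you differ is in how the order statistics are controlled and at which exponent. The paper works with the exact $\gamma=1/(\tau-1)$; since $\Lambda^{\ssup 1}_N/N^{1/(\tau-1)}$ has a nondegenerate limit, the bound cannot hold with probability tending to one at that exponent, so the paper settles for probability $\geq 1-2\eps$ with $\kappa=\kappa(\eps)$, using Bernstein's inequality for indices $i\geq M$ and a Poisson approximation for the top $2M$ order statistics, and then lets $\eps$ be arbitrary at the level of the final whp statement. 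You instead inflate the exponent to $\gamma=(\tau-1)^{-1}+\eps$, which is precisely what makes your Chernoff-type union bound $\P\{\mathrm{Bin}(N,p_i)\geq i\}\leq (eNp_i/i)^i$ summable (indeed, with $C$ above the tail-asymptotic threshold the bound $p_i\leq 2cC^{1-\tau}(i/N)^{1+\eps(\tau-1)}$ holds for all $i\leq N$, so your separate treatment of the bulk range is not even needed), giving a good event of probability tending to one with a fixed constant $C$; the price is that you obtain the lower bound with constant $2/\log(\gamma/(1-\gamma))$ and must remove $\eps$ by a limiting argument at the end, exactly as you indicate. Both routes are sound; the paper's buys the exact constant $-\log(\tau-2)$ in one pass at the cost of handling the extremes via Poisson approximation on an event of probability $1-2\eps$, while yours buys a genuinely high-probability capacity event and a simpler tail estimate at the cost of an $\eps$-loss in the exponent that is removed afterwards. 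You are also somewhat more explicit than the paper on two minor points: the conditional application of Theorem~\ref{thm:noPA} uniformly over capacity sequences in the good event, and the passage from uniform vertices on $[N]$ to uniform vertices in the giant component.
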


\begin{rem}{\rm
The upper bound is proved in~\cite[Theorem 4.2]{NR06}, where it is also shown that a giant component exists.
For the lower bound we verify Assumption~$\mathrm{CM}(\gamma)$ for  $\gamma=1/(\tau-1)$ and apply 
Theorem~\ref{thm:noPA}.}
\end{rem}

\begin{proof}
We check that Assumption~$\mathrm{CM}(\gamma)$ holds with high probability, conditionally given the capacities. 
For fixed~$N$ we
put the capacities in decreasing order
$$\Lambda_N^{\ssup 1}> \Lambda_N^{\ssup 2} > \cdots > \Lambda_N^{\ssup N}$$
and relabel the vertices so that the $j$th vertex has weight~$\Lambda^{\ssup j}_N$. We recall from~\cite[Proposition 2.1]{NR06}
that the number of edges between vertices $i$ and $j$ in $\mathcal G_N$ is Poisson distributed with parameter
${\Lambda^{\ssup i}_N \Lambda^{\ssup j}_N}/L_N.$
As the edges are conditionally independent we only have to verify that, given $\eps>0$ there exists $\kappa>0$ such that
\begin{equation}\label{tosh}
1-\exp\Big(-\sfrac{\Lambda^{\ssup i}_N \Lambda^{\ssup j}_N}{L_N}\Big) 
\leq \kappa N^{2\gamma-1} i^{-\gamma} j^{-\gamma} \quad \mbox{ for all }1\leq i<j\leq N,
\end{equation}
with probability $\ge 1-2\eps$. By the law of large numbers $L_N$  is of order~$N$, 
so that it suffices to establish
$\Lambda^{\ssup i}_N \leq \kappa \, ({N}/i)^{\gamma}$ for all $1\leq i \leq N$.
To this end we denote by $S_N^{\ssup i}$ the number of potential values exceeding
$\kappa \, ({N}/i)^{\gamma}$. The random variable $S_N^{\ssup i}$ is binomially distributed
with parameters $N$ and $p:= \P\{ \Lambda_1> \kappa \, ({N}/{i})^{\gamma}\}\leq c(\kappa)\, \frac{i}{N},$
where $c(\varkappa)\downarrow 0$ for $\varkappa\uparrow\infty$. By Bernstein's inequality, see e.g.~\cite[(8)]{Be62},
$$\P\big\{ S^{\ssup i}_N > 2i\big\}  \leq \exp\bigg[\frac{-i^2/2}{{\mathrm{Var}}(S_N^{\ssup i})+i/3} \bigg]
\leq e^{-\frac3{8}\,i} \quad \mbox{ if $c(\kappa)<1$.}$$ 
Hence we may choose $M$ large enough so that $\sum_{i=M}^\infty \exp(-\frac3{8}i)<\eps$, ensuring that
with probability exceeding $1-\eps$ we have $\Lambda^{\ssup {2i}}_{N} \leq \kappa \, ({N}/{i})^{\gamma}$ 
for all $i\geq M$. It remains to give bounds on $\Lambda^{\ssup 1}_N, \ldots, 
\Lambda^{\ssup {2M}}_N$. By a standard Poisson approximation result, see e.g.~\cite[Proposition 3.21]{Re08},
we note that for any $1\leq i \leq 2M$, we have that $S_N^{\ssup i}$ converges weakly to a Poisson distribution
with parameter $\lambda:=\lim_{N\to\infty}
N \P\{ \Lambda_1> \kappa \, ({N}/{i})^{\gamma}\}\leq 2c(\kappa) M,$
and hence, by choosing $\kappa$ large, we can ensure that for large~$N$, we have
$\sum_{i=1}^{2M} \P\{ S^{\ssup i}_N > i\}\leq \eps,$  which completes the proof.
\end{proof}
\end{example}
\medskip

A model which also falls in the universality class of configuration models are the random networks with fixed 
degree sequence\footnote{In fact, in the literature these models are often called configuration models. We prefer 
to use the term for the wider class of models where vertices are equipped with an a-priori configuration of individual
features.}. This model is well studied and very detailed results on average distances in the case of
power laws with exponent~$\tau\in(2,3)$ are obtained, in particular by van der Hofstad et al.~in \cite{HHZ07}.
\medskip

\begin{example}[Random networks with fixed degree sequence]\label{exp:cm}
The idea behind this class of models is to enforce a particular power-law exponent by fixing the degree sequence of
the network in a first step. We therefore choose a sequence $D_1, D_2, \ldots$ of independent and identically distributed
random variables with values in the nonnegative integers. For given $N$ we assume that 
$$L_N:=\sum_{j=1}^N D_j$$
is even, which may be achieved by replacing $D_N$ by $D_{N}-1$ if necessary. Thus given
$D_1,\ldots, D_N$ we construct the network $\mathcal G_N$ as follows:
\begin{itemize}
\item To any vertex $m\in[N]$ we attach $D_m$ half-edges or stubs. 
\item The $L_N$ stubs are given an (arbitrary) order.
\item We start by pairing the first stub with a (uniformly) randomly chosen other stub, and continue pairing the lowest numbered 
unpaired stub with a remaining randomly chosen stub until all stubs are matched.
\item Any pair of stubs are connect to form an edge. 
\end{itemize}
Obviously the resulting network can have self-loop and double edges, but this has no relevance for the typical distances
in the giant component. In order to be in the ultrasmall regime we require the law of the degrees to be
a power law with exponent $2<\tau<3$.
\smallskip

\begin{prop}
Assume that there exists $c>0$ such that
$$\P\{D_1>x\}=x^{1-\tau}\,(c+o(1)) \quad \mbox{ for all sufficiently large } x.$$
For independent, uniformly chosen vertices $V$ and $W$ in the giant component we have
$$d_N(V,W) = (2+o(1)) \,  \frac{\log\log N}{-\log(\tau-2)}  \qquad \mbox{ with high probability. }$$
\end{prop}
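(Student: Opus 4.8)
The plan is to take the matching upper bound from the literature and to derive the lower bound from Theorem~\ref{thm:noPA}. Under the present tail assumption, van der Hofstad et al.~\cite{HHZ07} (see also~\cite{H11}) establish both the existence of a giant component and the upper bound $d_N(V,W)\le(2+o(1))\,\frac{\log\log N}{-\log(\tau-2)}$. For the lower bound we verify Assumption~$\mathrm{CM}(\gamma)$ with $\gamma=(\tau-1)^{-1}$; since $\tau\in(2,3)$ this gives $\gamma\in(\tfrac12,1)$ and $\log\frac{\gamma}{1-\gamma}=-\log(\tau-2)$, so Theorem~\ref{thm:noPA} yields $d_N(V,W)\ge 2\,\frac{\log\log N}{-\log(\tau-2)}+\mathcal O(1)$ with high probability, which together with the upper bound is the assertion. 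Throughout we use that $\tau>2$ forces $\mu:=\E D_1<\infty$, so that $L_N/N\to\mu$ almost surely by the law of large numbers.

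We argue conditionally on the degree sequence. Since the configuration multigraph has, given the degrees, a law invariant under relabelling the vertices, we may assume the vertices are labelled so that vertex $i$ has the $i$-th largest degree, denoted $D_N^{\ssup 1}\ge D_N^{\ssup 2}\ge\cdots\ge D_N^{\ssup N}$; this leaves the law of $d_N(V,W)$ for uniform $V,W$ unchanged. Exactly as in the proof for the conditionally Poissonian graph (Example~\ref{exp:norei}), Bernstein's inequality for the order statistics in the bulk together with a Poisson approximation for the few largest degrees shows that for every $\eps>0$ there is $\kappa>0$ with
$$\P\bigl\{D_N^{\ssup i}\le\kappa\,(N/i)^{\gamma}\text{ for all }1\le i\le N\text{ and all large }N\bigr\}\ge 1-\eps,$$
the exponent $\gamma=(\tau-1)^{-1}$ being the one dictated by $\P\{D_1>x\}=x^{1-\tau}(c+o(1))$.

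It remains to control path probabilities, where, unlike in the inhomogeneous random graphs of Examples~\ref{exp:irgdw}--\ref{exp:norei}, distinct edges are not independent. For pairwise distinct vertices $v_0,\dots,v_\ell$, the chain $v_0\leftrightarrow\cdots\leftrightarrow v_\ell$ forces, for each $k\in\{1,\dots,\ell\}$, some stub of $v_{k-1}$ to be matched to some stub of $v_k$; revealing these $\ell$ matchings one after another, each has conditional probability at most $(L_N-2\ell)^{-1}$, so a union bound over the at most $\prod_{k=1}^\ell D_N^{\ssup{v_{k-1}}}D_N^{\ssup{v_k}}$ choices of stub pairs gives
$$\P\{v_0\leftrightarrow v_1\leftrightarrow\cdots\leftrightarrow v_\ell\}\le\prod_{k=1}^\ell\frac{D_N^{\ssup{v_{k-1}}}\,D_N^{\ssup{v_k}}}{L_N-2\ell}.$$
Only path lengths $\ell\le 2\delta=\mathcal O(\log\log N)=o(N)$ enter the truncated first moment estimate~\eqref{tfme}, so $L_N-2\ell\ge\tfrac12\mu N$ for all large $N$; combining this with the order-statistics bound of the previous paragraph, the right-hand side is at most $\prod_{k=1}^\ell\frac{2\kappa^2}{\mu}\,N^{2\gamma-1}v_{k-1}^{-\gamma}v_k^{-\gamma}$, which is precisely Assumption~$\mathrm{CM}(\gamma)$ with constant $2\kappa^2/\mu$, valid conditionally on the degrees on an event of probability at least $1-\eps$. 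Theorem~\ref{thm:noPA} then applies; and since the giant component contains a positive proportion of the vertices with high probability, replacing uniform vertices of $[N]$ by uniform vertices of the giant component changes the relevant probabilities only by a bounded factor, so the lower bound persists. The one genuinely delicate ingredient is the uniform tail control of the order statistics $D_N^{\ssup i}$, carried out precisely as in Example~\ref{exp:norei}; the dependence between edges, by contrast, costs no more than the harmless passage from $L_N$ to $L_N-2\ell$.
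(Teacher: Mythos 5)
Your argument is correct and follows essentially the same route as the paper: cite \cite{HHZ07} for the upper bound, then verify Assumption~$\mathrm{CM}(\gamma)$ conditionally on the degree sequence --- bounding path probabilities through the stub-pairing mechanism with a denominator $L_N-\mathcal{O}(\log\log N)$ controlled by the law of large numbers, and bounding the ordered degrees by $\kappa(N/i)^{\gamma}$ exactly as in the conditionally Poissonian example --- before applying Theorem~\ref{thm:noPA}. Your version is only cosmetically different (a union bound over stub pairs in place of the paper's sequential conditional estimate, plus an explicit remark on passing from uniform vertices in $[N]$ to uniform vertices of the giant component), and both refinements are sound.
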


\begin{rem}{\rm
This and much more is proved in~\cite[Theorem 1.2]{HHZ07}. For an alternative approach to the lower bound we 
now verify Assumption $\mathrm{CM}(\gamma)$ for any $\gamma<1/(\tau-1)$ and paths of length up to $\ell=\mathcal{O}(\log\log N)$, which is 
clearly sufficient to apply Theorem~\ref{thm:noPA}.}
\end{rem}

\begin{proof}
We observe that, given $D_1,\dots,D_N$, for pairwise disjoint vertices $v_1,\dots,v_\ell,v_{\ell +1}$, 
$$\mathbb{P}\big\{v_{\ell} \leftrightarrow v_{\ell+1} \,\big|\,v_1\leftrightarrow v_2\leftrightarrow\dots\leftrightarrow v_{\ell-1}\leftrightarrow v_\ell\big\} 
\leq \frac{D_{v_{\ell}}D_{v_{\ell+1}}}{L_N-2 \sum_{k=1}^\ell D_{v_k}},$$
where the denominator is a rough lower bound on the number of stubs unaffected by the conditioning event.
In particular, $\mathbb{P}\{i\leftrightarrow j\} \leq \sfrac{D_iD_j}{L_N-2D_i}$. Using the law of large numbers one can easily see that there 
is a $c>0$ such that 
$$L_N-2\sum_{k=1}^\ell D_{v_k}\geq cN \quad \mbox{  with high probability, }$$
 for any choice of $v_1,\dots,v_\ell$, if $\ell=\mathcal{O}(\log\log N)$. Therefore, to verify Assumption $\mathrm{CM}(\gamma)$ we only need to find appropriate bounds on the degrees of given vertices, which can be achieved (using the same relabeling) by a similar argument as in Example~4.\end{proof}
\end{example}

\section{Proofs}

\subsection{Proof of Theorem~\ref{thm:PA}}

In this  section, we assume validity of Assumption $\mathrm{PA}(\gamma)$ for a $\gamma\in(\frac12,1)$ with a fixed constant $\kappa$. 
Given a vector $(q(1), \ldots, q(n))$ we use the notation 
$$q[m]:=\sum_{i=1}^m q(i) \mbox{ for all $1\leq m\leq n$.} $$
We adopt the notation of the discussion at the end of Section~2. In particular recall the definition of $\mu_k^{\ssup v}$  and 
the key estimates~\eqref{tfme}, \eqref{majorant} and \eqref{thissum}, which combined give
\begin{equation}\label{key}
\P\{d_N(v,w)\le 2\delta\}\le \sum_{k=1}^\delta \mu_k^{\ssup v}[\ell_{k}-1] +\sum_{k=1}^\delta \mu_k^{\ssup w}[\ell_{k}-1] + 
\sum_{n=1}^{2\delta} \sum_{u=\ell_{n^*}}^N \mu_{n^*}^\ssup{v} (u) \mu_{n-n^*}^\ssup{w}(u).
\end{equation}
The remaining task of the proof is to choose $\delta\in\N$ and $2\leq \ell_\delta \leq \ldots \leq \ell_0\leq N$ which allow
the required estimates for the right hand side.  To do so we will make use of the recursive representation 
$$\mu_{k+1}^{\ssup v}(n)=\sum_{m=\ell_k}^N \mu_{k}^\ssup {v}(m) \,p(m,n) \quad \mbox{for $k\in\{0,\dots,\delta-1\}$ and $n\in[N]$, }$$
where $\mu_0^{\ssup v}(n)=\1\{v=n\}$ and $$p(m,n)=\kappa (m\wedge n)^{-\gamma} (m\vee n)^{\gamma-1}.$$
Denote by  $\bar \mu_k^\ssup{v}(m)=\ind_{\{m\ge \ell_k\}}\,\mu_k^\ssup{v}(m)$ the truncated version of $\mu_k^\ssup{v}$ and 
conceive $\mu_k^\ssup{v}$ and $\bar\mu^{\ssup v}_k$ as row vectors. Then 
\begin{align}\label{eq2511-1}
\mu_{k+1}^{\ssup v}=\bar \mu_{k}^\ssup {v} \,\bP_N,
\end{align}
where $\bP_N=(p(m,n))_{m,n=1,\dots,N}$. Our aim is to provide a majorant of the form
\begin{equation}\label{eq:mubound}
	\mu_k^{\ssup{v}}(m) \le \alpha_k  m^{-\gamma} + \one_{\{m>\ell_{k-1}\}} \beta_k m^{\gamma-1}
\end{equation}
for suitably chosen parameters~$\alpha_k,\beta_k\geq 0$. Key to this choice is the following lemma.

\begin{lemma}\label{lem:Pbound}
   Suppose that
 $2\leq\ell\leq N$, $\alpha, \beta\geq 0$ and $q\colon [N]\to[0,\infty)$ satisfies
	 $$
	 q(m)\le \ind\{m\geq \ell\} (\alpha m^{-\gamma} +\beta m^{\gamma-1}) \quad \mbox{ for all }m\in[N].
	 $$
Then there exists a constant $c>1$ (depending only on $\gamma$ and $\kappa$) such that
$$
q\bP_N(m) \le c 
\Bigl(\alpha \log \Bigl(\frac N \ell \Bigr)+\beta \,N^{2\gamma-1}\Bigr) m^{-\gamma}
+ \ind\{m>\ell\} c \Bigl(\alpha \ell^{1-2\gamma} +\beta \log \Bigl(\frac N \ell \Bigr)\Bigr)m^{\gamma-1}
$$
for all $m\in[N]$.
\end{lemma}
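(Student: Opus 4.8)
The plan is to compute $q\bP_N(m)=\sum_{n=\ell}^N q(n)\,p(n,m)$ directly, using the pointwise bound on $q$ and the explicit kernel $p(n,m)=\kappa(n\wedge m)^{-\gamma}(n\vee m)^{\gamma-1}$. Splitting the inner sum at $n=m$ gives two regimes: for $n\le m$ we have $p(n,m)=\kappa n^{-\gamma}m^{\gamma-1}$, and for $n\ge m$ we have $p(n,m)=\kappa m^{-\gamma}n^{\gamma-1}$. Substituting $q(n)\le \alpha n^{-\gamma}+\beta n^{\gamma-1}$ (for $n\ge\ell$) therefore produces four elementary sums of powers of $n$: $\sum n^{-2\gamma}$, $\sum n^{-1}$, and (in the $n\ge m$ half) $\sum n^{\gamma-1-\gamma}=\sum n^{-1}$ again together with $\sum n^{2\gamma-2}$. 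The whole computation is just estimating $\sum_{n=a}^b n^s$ by the corresponding integral, distinguishing the cases $s<-1$, $s=-1$, $s>-1$ that arise because $\gamma\in(\tfrac12,1)$ makes $-2\gamma<-1$ while $2\gamma-2\in(-1,0)$.

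Concretely I would organize it as follows. First, the part of $q\bP_N(m)$ coming from $n\in[\ell,m]$ (only nonempty when $m>\ell$, which is where the indicator $\ind\{m>\ell\}$ in the conclusion comes from) contributes $\kappa m^{\gamma-1}\sum_{n=\ell}^m(\alpha n^{-2\gamma}+\beta n^{-1})$; bounding $\sum_{n=\ell}^m n^{-2\gamma}\le C\ell^{1-2\gamma}$ (since $2\gamma>1$, the sum converges from below and is dominated by its first term up to a constant depending on $\gamma$) and $\sum_{n=\ell}^m n^{-1}\le C\log(N/\ell)$ yields exactly the $\ind\{m>\ell\}\,c(\alpha\ell^{1-2\gamma}+\beta\log(N/\ell))m^{\gamma-1}$ term. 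Second, the part from $n\in[m\vee\ell,N]$ contributes $\kappa m^{-\gamma}\sum_{n=m\vee\ell}^N(\alpha n^{-1}+\beta n^{2\gamma-2})$; here $\sum n^{-1}\le C\log(N/\ell)$ and, since $2\gamma-2>-1$, $\sum_{n}^N n^{2\gamma-2}\le C N^{2\gamma-1}$, giving the $c(\alpha\log(N/\ell)+\beta N^{2\gamma-1})m^{-\gamma}$ term. Taking $c$ to be the maximum of the constants produced (all depending only on $\gamma$ and $\kappa$, and enlarged to exceed $1$) finishes it.

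The only subtlety worth care is bookkeeping around the cutoff $\ell$ versus $m$: when $m\le\ell$ the first regime is empty and one must check the stated bound still holds (it does, since the second regime alone already gives the $m^{-\gamma}$ term and the $m^{\gamma-1}$ term carries the indicator $\ind\{m>\ell\}$). I would also note that for $m\ge\ell$ the lower limit $m\vee\ell$ in the second sum can be replaced by $\ell$ at the cost of a constant, so that the $\log(N/\ell)$ and $N^{2\gamma-1}$ bounds apply uniformly. Everything else is the routine comparison of a monotone power sum with its integral; there is no genuine obstacle, only the need to keep the three exponent regimes $-2\gamma<-1$, $-1$, $2\gamma-2>-1$ straight and to track that all constants depend solely on $\gamma$ and $\kappa$.
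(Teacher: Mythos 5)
Your proposal is correct and follows essentially the same route as the paper's proof: split the sum over $n$ at $m$ so that the kernel becomes $\kappa n^{-\gamma}m^{\gamma-1}$ resp.\ $\kappa m^{-\gamma}n^{\gamma-1}$, insert the bound on $q$, and estimate the resulting four power sums ($\sum n^{-2\gamma}\lesssim \ell^{1-2\gamma}$, $\sum n^{-1}\lesssim\log(N/\ell)$, $\sum n^{2\gamma-2}\lesssim N^{2\gamma-1}$) by integral comparison, with the indicator $\ind\{m>\ell\}$ arising exactly as you say from the emptiness of the lower range when $m\le\ell$. No substantive differences.
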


\begin{proof}
One has
	\begin{align*}
		 q\bP_N(m)& = \ind\{m>\ell\}  \sum_{k=\ell}^{m-1} q(k)\, p(k,m)+  \sum_{k=m\vee \ell}^{N} q(k)\, p(k,m) \\
&\le \ind\{m>\ell\}  \sum_{k=\ell}^{m-1} \kappa (\alpha k^{-\gamma} +\beta k^{\gamma-1}) k^{-\gamma} m^{\gamma-1} + \sum_{k=m\vee \ell}^{N}\kappa (\alpha k^{-\gamma} +\beta k^{\gamma-1}) k^{\gamma-1}  m^{-\gamma} \\
		          & \leq \kappa \Kl \alpha \sum_{k=m\vee \ell}^{N} k^{-1}+\beta  \sum_{k=m\vee\ell}^{N}k^{2\gamma-2}
	\Kr m^{-\gamma}\\		         
		 &\qquad\qquad+\ind\{m>\ell\} \kappa\Kl  \alpha  \sum_{k=\ell}^{m-1}k^{-2\gamma} + \beta \sum_{k=\ell}^{m-1}k^{-1}\Kr m^{\gamma-1} \\
		 & \leq  \kappa \Kl\alpha \log\Kl\frac{m}{\ell-1}\Kr+ \frac{\beta}{2\gamma-1}N^{2\gamma-1}\Kr m^{-\gamma}\\
		 &\qquad\qquad +\ind\{m>\ell\} \kappa \Kl\frac{\alpha}{1-2\gamma} (\ell-1)^{1-2\gamma}+\beta\log\Kl\frac{m}{\ell-1}\Kr\Kr m^{\gamma-1}.
	\end{align*}
This implies immediately the assertion since $\ell\ge 2$ by assumption. 
\end{proof}

We apply Lemma~\ref{lem:Pbound} iteratively. Fix $\eps>0$ small and start with 
$$\ell_0=\lceil \eps N\rceil, \, \alpha_1=\kappa (\eps N)^{\gamma-1} \mbox{ and } 
\beta_1=\kappa  (\eps N)^{-\gamma}.$$ Fix $v\ge\ell_0$. Then, for all $m\in[N]$, 
$$\begin{aligned}\mu_1^{\ssup v} (m) & = p(v,m) \le \kappa \ell_0^{\gamma-1} m^{-\gamma} + \one\{m>\ell_0\}\, \kappa \ell_0^{-\gamma} m^{\gamma-1}\\
& \le \alpha_1 m^{-\gamma} + \one\{m>\ell_0\}\, \beta_1 m^{\gamma-1}.
\end{aligned}$$
Now suppose, for some $k\in\N$, we have chosen $\alpha_k, \beta_k$ and an integer $\ell_{k-1}$ such that
$$\mu_k^{\ssup v} (m) \le \alpha_k m^{-\gamma} + \beta_k m^{\gamma-1} \mbox{ for all $m\in[N]$.}$$
We choose $\ell_k$ as an integer satisfying
\begin{equation}\label{elldef}
\frac{6\eps}{\pi^2k^2} \geq  \frac{1}{1-\gamma}  \alpha_{k} \ell_{k}^{1-\gamma},
\end{equation}
and assume $\ell_k\geq 2$. Pick $\alpha_k,\beta_k$  such that
\begin{equation}\label{eq112-2}
\begin{aligned}
\alpha_{k+1} & \geq c\, \Big( \alpha_k \log\Bigl(\sfrac N {\ell_k}\Bigr)+ \beta_k N^{2\gamma-1} \Big),\\
\beta_{k+1}  & \geq c\, \Big( \alpha_k \ell_k^{1-2\gamma} + \beta_k \log\Bigl(\sfrac N {\ell_k}\Bigr)  \Big).
\end{aligned}
\end{equation}
By the induction hypothesis we can apply Lemma~\ref{lem:Pbound} 
with $\ell=\ell_k$ and $q(m)=\bar \mu_k^{\ssup v}(m)$. Then, using~\eqref{eq2511-1}, 
\begin{equation}\label{major}
\mu_{k+1}^{\ssup v}(m) \leq \alpha_{k+1} m^{-\gamma} + \1\{m>\ell_k\} \beta_{k+1} m^{\gamma-1} \quad\mbox{for all $m\in[N]$,}
\end{equation}
showing that the induction can be carried forward up to the point where $\ell_k< 2$.
\smallskip

Summing over~\eqref{major} and using \eqref{elldef} we obtain
$$\mu^{\ssup v}_{k}[\ell_{k}-1] \le \frac{1}{1-\gamma}  \alpha_{k} \ell_{k}^{1-\gamma}
\leq \frac{6\eps}{\pi^2k^2}.$$
Hence the first two summands on the right hand side in~\eqref{key} are together smaller than $2\eps$. 
It remains to choose $\delta=\delta(N)$ as large as possible while ensuring that $\ell_\delta\geq 2$ and
$$\lim_{N\to \infty}
\sum_{n=1}^{2\delta} \sum_{u=\ell_{n^*}}^N \mu_{n^*}^\ssup{v} (u) \mu_{n-n^*}^\ssup{w}(u)=0.$$
To this end assume that $\ell_k$ is the largest integer satisfying~\eqref{elldef} and
the parameters~$\alpha_k, \beta_k$ are defined via equalities in \eqref{eq112-2}.  To establish lower bounds
for the decay of $\ell_k$ we investigate
the growth of $\eta_k:=N/\ell_k>0$. Going backwards through the definitions yields, for $k\geq 1$, that 
$$\big( \eta_{k+2}^{-1} + \sfrac1N\big)^{\gamma-1} \leq 
\sfrac{c^2 (k+2)^2}{k^2} \eta_k^{\gamma}
+2c \,\sfrac{(k+2)^2}{(k+1)^2} \, \eta_{k+1}^{1-\gamma} \log \eta_{k+1},$$
with $\eta_1, \eta_2\leq C_0$ for some constant $C_0>0$ (which, as all constants in this paragraph, may depend 
on $\eps$).  It is easy to check inductively that for any solution of this system there exist constants $b,B>0$ such that,
\begin{equation}\label{ellbound}
\eta_k\leq  b\,\exp\Big( B \big(\sqrt{\sfrac{\gamma}{1-\gamma}} \big)^k \Big).
\end{equation}
We now use \eqref{major} to estimate
\begin{align*}
\sum_{n=1}^{2\delta} \sum_{u=\ell_{k}}^N \mu_{n^*}^\ssup{v} (u) \mu_{n-n^*}^\ssup{w}(u)
& \leq 2 \sum_{k=1}^{\delta} \sum_{u=\ell_{k}}^N \big(\alpha_ku^{-\gamma}+\beta_k u^{\gamma-1}\big)^2 \\
& \leq \sfrac4{2\gamma-1}\, \sum_{k=1}^{\delta} \big( \alpha_k^2 \ell_k^{1-2\gamma} + \beta_k^2 N^{2\gamma-1}\big)
 \leq \sfrac4{2\gamma-1}\, \delta  \big( \alpha_\delta^2 \ell_\delta^{1-2\gamma} + \beta_\delta^2 N^{2\gamma-1} \big) .
\end{align*}
Using~\eqref{elldef} and~\eqref{ellbound} the first summand in the bracket can be estimated by
$$\alpha_\delta^2 \ell_\delta^{1-2\gamma}
\leq \big(\delta^{-2} \sfrac{6\eps}{\pi^2} (1-\gamma)\big)^2 \ell_\delta^{-1} 
\leq \big(\sfrac{6\eps}{b\pi^2} (1-\gamma)\big)^2\, \frac1{N\delta^4} \,  
\exp\Big( B \,  \big(\sfrac{\gamma}{1-\gamma}\big)^{\delta/2}\Big).$$
Using equality in~\eqref{eq112-2} we get
$\beta_\delta \leq c ( \alpha_\delta \ell_\delta^{1-2\gamma} + 
\alpha_\delta N^{1-2\gamma} \log({N}/{\ell_\delta})).$
Noting that the second summand on the right hand side  is bounded by a multiple of the first, we find a constant
$C_1>0$ such that 
$\beta_\delta^2 N^{2\gamma-1} \leq C_1 \alpha_\delta^2 \ell_\delta^{1-2\gamma},$
and thus, for a suitable constant~$C_2>0$,
$$\sum_{n=1}^{2\delta} \sum_{u=\ell_{k}}^N \mu_{n^*}^\ssup{v} (u) \mu_{n-n^*}^\ssup{w}(u)
\leq C_2\,  \frac1{N\delta^3} \,  
\exp\Big( B \,  \big(\sfrac{\gamma}{1-\gamma}\big)^{\delta/2}\Big).$$
Hence,  for a suitable constant $C>0$, choosing
$$\delta \leq \frac{\log\log N}{\log\sqrt{\frac{\gamma}{1-\gamma}}}-C$$
we obtain that the term we consider goes to zero of order ${\mathcal O}((\log \log N)^{-3})$.
Note from~\eqref{ellbound} that this choice also ensures that $\ell_\delta\geq 2$. We have thus 
shown that 
$$\P\big\{ d_N(v,w) \geq 2 \delta\big\} \leq 2\eps + {\mathcal O}\big((\log \log N)^{-3}\big),$$
whenever $v,w \geq \ell_0=\lceil \eps N\rceil$, which implies the statement of Theorem~1.
\bigskip

\subsection{Proof of Theorem~\ref{thm:noPA}}
In this  section, we assume validity of Assumption $\mathrm{CM}(\gamma)$ for some $\gamma\in(\frac12,1)$ 
with a fixed constant $\kappa\geq 1$. Recall again  the notation and framework from the introductory chapter. 
We use the same approach as in the proof
of Theorem~\ref{thm:PA} but now we have to consider the matrix $\bP_N:=(p(m,n))_{m,n\in [N]}$ given by
\begin{equation} 
p(m,n):=\kappa m^{-\gamma}n^{-\gamma}N^{2\gamma-1} \text{ for } m,n\in [N]. \label{def:transprob2}
\end{equation}
We obtain the following lemma, which is the analogue of Lemma~\ref{lem:Pbound}.
\medskip
\begin{lemma}\label{lem:Pbound2}
	Suppose that $2 \leq \ell\leq N$ and $q\colon [N]\to[0,\infty)$ satisfies
	 $$	 q(m)\le \ind\{m\geq \ell\}\, m^{\gamma-1}\ell^{-\gamma}  \quad \mbox{ for all }m\in[N]. $$
Then, for all $m\in[N]$, 
	$$q\bP_N(m)\le \kappa \, m^{-\gamma} N^{\gamma-1}\, \bigl(\sfrac{N}{\ell}\bigr)^{\gamma}\log\big(\sfrac{N-1}{\ell-1}\big).$$
\end{lemma}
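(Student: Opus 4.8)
The plan is to prove Lemma~\ref{lem:Pbound2} by a direct computation, splitting the sum defining $q\bP_N(m)$ according to whether the summation index lies below or above $m$, just as in the proof of Lemma~\ref{lem:Pbound}. Writing out the definition of $\bP_N$ in~\eqref{def:transprob2} and using the hypothesis $q(k)\le \ind\{k\ge \ell\}\,k^{\gamma-1}\ell^{-\gamma}$, we obtain
\begin{align*}
q\bP_N(m) &= \sum_{k=\ell}^N q(k)\,\kappa\, k^{-\gamma} m^{-\gamma} N^{2\gamma-1}
\le \kappa\, m^{-\gamma} N^{2\gamma-1} \ell^{-\gamma} \sum_{k=\ell}^N k^{\gamma-1} k^{-\gamma}\\
&= \kappa\, m^{-\gamma} N^{2\gamma-1} \ell^{-\gamma} \sum_{k=\ell}^N k^{-1}.
\end{align*}
So unlike the $\mathrm{PA}$ case, where the kernel $p(m,n)=\kappa(m\wedge n)^{-\gamma}(m\vee n)^{\gamma-1}$ genuinely depends on the ordering of $m$ and $n$ and forces one to split the sum, here $p(m,n)=\kappa m^{-\gamma} n^{-\gamma} N^{2\gamma-1}$ is a product of a function of $m$ and a function of $n$, so no case distinction is needed at all and the $m^{-\gamma}$ factors out cleanly.

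The remaining step is to bound the harmonic-type sum $\sum_{k=\ell}^N k^{-1}$. The standard estimate is $\sum_{k=\ell}^N k^{-1} \le \int_{\ell-1}^{N-1} x^{-1}\,dx = \log\bigl(\frac{N-1}{\ell-1}\bigr)$, which uses $\ell\ge 2$ so that $\ell-1\ge 1>0$. Substituting this back gives
$$q\bP_N(m) \le \kappa\, m^{-\gamma} N^{2\gamma-1} \ell^{-\gamma} \log\bigl(\sfrac{N-1}{\ell-1}\bigr)
= \kappa\, m^{-\gamma} N^{\gamma-1}\bigl(\sfrac{N}{\ell}\bigr)^{\gamma}\log\bigl(\sfrac{N-1}{\ell-1}\bigr),$$
where in the last step I just rewrote $N^{2\gamma-1}\ell^{-\gamma} = N^{\gamma-1}\cdot N^{\gamma}\ell^{-\gamma} = N^{\gamma-1}(N/\ell)^{\gamma}$. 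This is exactly the claimed bound, valid for all $m\in[N]$, and the proof is complete.

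There is essentially no obstacle here — this lemma is the ``easy'' analogue of Lemma~\ref{lem:Pbound}, and the only points requiring a moment's care are (i) making sure the harmonic sum is estimated with the shifted endpoints $\ell-1$ and $N-1$ rather than $\ell$ and $N$, so that the bound comes out in the precise form $\log\bigl(\frac{N-1}{\ell-1}\bigr)$ stated in the lemma, and (ii) observing that the hypothesis $\ell\ge 2$ is what legitimises writing $\log(\ell-1)$, i.e.\ keeps the logarithm's argument bounded away from the singularity at $0$. One subtlety worth noting is that the hypothesis here is only $q(m)\le \ind\{m\ge\ell\}\,m^{\gamma-1}\ell^{-\gamma}$, a single term rather than the two-term majorant $\alpha m^{-\gamma}+\beta m^{\gamma-1}$ of Lemma~\ref{lem:Pbound}; this is consistent because in the $\mathrm{CM}$ setting the output of one application of $\bP_N$ is again purely of the form $(\text{const})\cdot m^{-\gamma}$, so the induction in the main proof of Theorem~\ref{thm:noPA} will only ever need to feed majorants of this single-term shape back into the lemma.
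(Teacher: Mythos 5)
Your argument is exactly the paper's proof: factor the product kernel $p(k,m)=\kappa k^{-\gamma}m^{-\gamma}N^{2\gamma-1}$ out of the sum, reduce to the harmonic sum $\sum_{k=\ell}^N k^{-1}$, and bound it by $\log\bigl(\frac{N-1}{\ell-1}\bigr)$, so there is nothing to add. (The only quibble, which you share with the paper, is that the integral comparison should run up to $N$ rather than $N-1$, giving $\log\bigl(\frac{N}{\ell-1}\bigr)$; this is immaterial for every use of the lemma in the proof of Theorem~\ref{thm:noPA}.)
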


\begin{proof}
By \eqref{def:transprob2} and the assumption on $q$,
	\begin{align*}
	 q\bP_N(m)	=\sum_{i=1}^Nq(i)p(i,m)
	\leq \kappa m^{-\gamma}\ell^{-\gamma}N^{2\gamma-1}\sum_{i=\ell}^N \sfrac1{i}
  \leq \kappa m^{-\gamma}\ell^{-\gamma}N^{2\gamma-1}\,\log\big(\sfrac{N-1}{\ell-1}\big),
	\end{align*}
	which implies the statement of the lemma.
\end{proof}

For fixed $\eps>0$ we first construct inductively a strictly decreasing sequence of integers $(\ell_k)_{k=0, \ldots, \delta}$ 
by letting $\ell_0=\left\lceil\eps N\right\rceil$ and defining $\ell_{k+1}$ as the largest integer such that, given $\ell_k\geq 2$,
\begin{equation}\label{eq:noPAelldef}
	\frac{\kappa}{1-\gamma} \,\Kl\frac{\ell_{k+1}}{N}\Kr^{1-\gamma}\leq\frac{6\eps}{\pi^2(k+1)^2}\, \Big(\log\big( \sfrac{N-1}{\ell_{k}-1}\big)\Big)^{-1}\, \Kl\frac{\ell_k}{N}\Kr^{\gamma}.
\end{equation}
Recall the definition and recursive formula
for $\mu_k^{\ssup v}$ and let $\bar{\mu}_k^{\ssup v}(m):=\ind{\{m\geq\ell_k\}}\mu_k^{\ssup v}(m)$. Then $\mu_{k+1}^{\ssup v}(m)= \bar{\mu}_k^{\ssup v}\bP_N(m).$ We now apply inductively Lemma~\ref{lem:Pbound2} and obtain,
\begin{equation}\label{useful}
	\mu_{k}^{\ssup v}(m)\leq \kappa\,  m^{-\gamma} N^{\gamma-1}\Bigl(\frac{N}{\ell_{k-1}}\Bigr)^{\gamma} \log\Big(\sfrac{N-1}{\ell_{k-1}-1}\Big)
	\leq m^{-\gamma}\ell_{k}^{\gamma-1},\;\text{ for all } m\in[N].  
	\end{equation}
Note that the second inequality in~\eqref{useful} follows from \eqref{eq:noPAelldef}, and hence $\bar \mu_{k}^{\ssup v}(m) \leq  m^{\gamma-1}\ell_{k}^{-\gamma}$, which allows us to continue the induction. Considering the truncated first moment estimate~\eqref{tfme} for our choice of $(\ell_k)_{k=0, \ldots, \delta}$, we obtain from \eqref{useful} that 
$$\mathbb{P}\big(A^{\ssup v}_k\big)
\leq\mu_{k}^{\ssup v}[\ell_{k}-1]\leq \sfrac{\kappa}{1-\gamma} \, \Bigl(\frac{\ell_k}{N}\Bigr)^{1-\gamma}\Bigl(\frac{N}{\ell_{k-1}}\Bigr)^{\gamma} \, \log\Big(\sfrac{N-1}{\ell_{k-1}-1}\Big).$$
Hence~\eqref{eq:noPAelldef} entails that  $\sum_{k=1}^\del \mathbb{P}\big(A^{\ssup v}_k\big) \leq \eps$.
The last step is to choose $\delta=\delta(N)$ as large as possible while ensuring that $\ell_\delta\geq 2$ and
\begin{equation}\label{soll}
\lim_{N\to \infty}
\sum_{n=1}^{2\delta} \sum_{u=\ell_{n^*}}^N \mu_{n^*}^\ssup{v} (u) \mu_{n-n^*}^\ssup{w}(u)=0.
\end{equation}
By \eqref{useful} the term on the left can be bounded by a constant multiple of $N^{2\gamma-2} \sum_{k=1}^\del  \ell_k^{1-2\gamma}.$
To verify~\eqref{soll} we have to bound the growth of the values $\eta_k:=\frac{N}{\ell_k}$. 
The choice made in \eqref{eq:noPAelldef} implies that $(\eta_k)_{k\ge0}$ obeys $\eta_0\leq\eps^{-1}$ and $$\big(\eta_{k+1}^{-1}+\sfrac{1}{N}\big)^{\gamma-1}<\sfrac{\pi^2 \kappa}{1-\gamma} \, \sfrac{(k+1)^2}{6\eps} \,  \eta_k^{\gamma} \log(2\eta_k),\;\text{ for }k\geq0.$$ 
From this it is straightforward to verify inductively the existence of constants $b,B>0$, which only depend on $\eps, \kappa$ and $\gamma$, such that $$\eta_k\leq b \exp\Big( B \big(\sfrac{\gamma}{1-\gamma}\big)^k\Big),\;\text{ for }k\geq0.$$
Hence, we may choose a suitable constant $C>0$ such that for 
$$\del \leq \frac{\log\log N}{\log\Kl\frac{\gamma}{1-\gamma}\Kr}-C$$  
we have $\ell_\del\geq 2$. To complete the proof, we note that
$$N^{2\gamma-2} \sum_{k=1}^\del  \ell_k^{1-2\gamma}\leq
\frac 1N  \sum_{k=1}^\del  \eta_k^{2\gamma-1} 
\leq \del \, b \, N^{B\big(\sfrac{\gamma}{1-\gamma}\big)^{-C}-1},$$
which implies convergence in \eqref{soll} when $C$ is chosen large enough.
\bigskip

\noindent {\bf Acknowledgements:} The third author would like to acknowledge the support of \emph{EPSRC} through the
award of an  \emph{Advanced Research Fellowship}.
\bigskip



{
\bibliographystyle{alpha}
\bibliography{loglog}

\begin{thebibliography}{DHH10}

\bibitem[Ben62]{Be62}
G.~Bennett.
\newblock Probability inequalities for the sum of independent random variables.
\newblock {\em Journal of the American Statistical Association}, 57:33--45,
  1962.

\bibitem[BR04]{BR04}
B.~Bollob{\'a}s and O.~Riordan.
\newblock The diameter of a scale-free random graph.
\newblock {\em Combinatorica}, 24:5--34, 2004.

\bibitem[CH03]{CH03}
R.~Cohen and S.~Havlin.
\newblock Scale-free networks are ultrasmall.
\newblock {\em Physical Review Letters}, 90:058701, 2003.

\bibitem[CL03]{CL02}
F.~Chung and L.~Lu.
\newblock The average distance in a random graph with given expected degrees.
\newblock {\em Internet Mathematics}, 1:91--113, 2003.

\bibitem[CL06]{ChLu06}
F.~Chung and L.~Lu.
\newblock {\em Complex graphs and networks}, volume 107 of {\em CBMS Regional
  Conference Series in Mathematics}.
\newblock Published for the Conference Board of the Mathematical Sciences,
  Washington, DC, 2006.

\bibitem[DHH10]{DHH10}
S.~Dommers, R.~van~der Hofstad, and G.~Hooghiemstra.
\newblock Diameters in preferential attachment models.
\newblock {\em Journal of Statistical Physics}, 139:72--107, 2010.

\bibitem[DM09]{DM09}
S.~Dereich and P.~M{\"o}rters.
\newblock Random networks with sublinear preferential attachment: degree
  evolutions.
\newblock {\em Electronic Journal of Probability}, 14:1222--1267, 2009.

\bibitem[DM10]{DM10}
S.~Dereich and P.~M{\"o}rters.
\newblock Random networks with sublinear preferential attachment: the giant
  component.
\newblock {\em Preprint arXiv:1007.0899. Submitted for publication}, 2010.

\bibitem[DM11]{DM11}
S.~Dereich and P.~M{\"o}rters.
\newblock Random networks with concave preferential attachment rule.
\newblock {\em Jahresbericht der Deutschen Mathematiker Vereinigung}, 2011.

\bibitem[DMS03]{DMS03}
S.~N. Dorogovtsev, J.~F.~F. Mendes, and A.~N. Samukhin.
\newblock Metric strucure of random networks.
\newblock {\em Nuclear Physics B}, 653:307--338, 2003.

\bibitem[HH08]{HH08}
R.~van~der Hofstad and G.~Hooghiemstra.
\newblock Universality for distances in power-law random graphs.
\newblock {\em Journal of Mathematical Physics}, 49:125209, 2008.

\bibitem[HHZ07]{HHZ07}
R.~van~der Hofstad, G.~Hooghiemstra, and D.~Znamenski.
\newblock Distances in random graphs with finite mean and infinite variance
  degrees.
\newblock {\em Electronic Journal of Probability}, 12:703--766, 2007.

\bibitem[Hof10]{H11}
R.~van~der Hofstad.
\newblock Random graphs and complex networks.
\newblock 2010.

\bibitem[M{\"o}n12]{M12}
C.~M{\"o}nch.
\newblock {\em Distances in preferential attachment networks}.
\newblock PhD thesis, University of Bath, 2012.

\bibitem[NR06]{NR06}
I.~Norros and H.~Reittu.
\newblock On a conditionally {P}oissonian graph process.
\newblock {\em Advances in Applied Probability}, 38:59--75, 2006.

\bibitem[NR08]{NR08}
I.~Norros and H.~Reittu.
\newblock Network models with a `soft hierarchy': A random graph construction
  with loglog scalability.
\newblock {\em IEEE Network}, 22:40--46, 2008.

\bibitem[Res08]{Re08}
S.~I. Resnick.
\newblock {\em Extreme values, regular variation, and point processes}.
\newblock Springer, New York, 2008.

\bibitem[RN02]{RN02}
H.~Reittu and I.~Norros.
\newblock On the effect of very large nodes in internet graphs.
\newblock In {\em Globecom'02, {\rm Vol. III (Proc. Global Telecommunications
  Conf., Taipei, 2002)}}, pages 2624--2628, 2002.

\end{thebibliography}
}

\end{document}